\documentclass[11pt, a4paper]{amsart}

\usepackage[T1]{fontenc}

\usepackage{a4wide}
\usepackage{graphicx,enumitem}
\usepackage{subfigure}
\usepackage{units}
\usepackage[usenames,dvipsnames,table]{xcolor}
\usepackage[sectionbib,numbers,sort&compress,merge]{natbib}

\usepackage{framed}

\usepackage{parskip}

\usepackage{ulem}
\normalem

\usepackage{amsmath}
\usepackage{amssymb}
\usepackage{latexsym}

\usepackage[luatex,
            pdftitle={Isotropic Q-fractional Brownian motion on the sphere},
            pdfauthor={A. Lang and B. Müller},
            bookmarksopen,
            colorlinks=true,
            linkcolor=black,
            urlcolor=blue,
	   		citecolor=black
]{hyperref}

\usepackage{dsfont}

\usepackage{tikz}

\allowdisplaybreaks
\usepackage{mathtools}

\newcommand{\IP}{\mathbb{P}}
\newcommand{\R}{\mathbb{R}}

\newcommand{\N}{\mathbb{N}}

\newcommand{\IS}{\mathbb{S}}
\newcommand{\IT}{\mathbb{T}}

\newcommand{\ga}{\alpha}
\newcommand{\gb}{\beta}

\newcommand{\gk}{\kappa}
\newcommand{\gl}{\lambda}

\newcommand{\gL}{\Lambda}
\newcommand{\gO}{\Omega}


\newcommand{\cA}{\mathcal{A}}


\newcommand{\cN}{\mathcal{N}}
\newcommand{\cO}{\mathcal{O}}

\DeclareMathOperator{\E}{\mathbb{E}} 
\DeclareMathOperator{\Var}{\mathsf{Var}} 

\DeclareMathOperator{\trace}{Tr}
\DeclareMathOperator{\Id}{Id}

\let\Re\relax
\DeclareMathOperator{\Re}{Re}
\let\Im\relax
\DeclareMathOperator{\Im}{Im}

\newcommand{\KL}{Karhunen--Lo\`eve }

\newtheorem{lemma}{Lemma}[section]

\newtheorem{theorem}[lemma]{Theorem}
\newtheorem{corollary}[lemma]{Corollary}

\theoremstyle{remark}
\newtheorem{remark}[lemma]{Remark}

\theoremstyle{definition}
\newtheorem{definition}[lemma]{Definition}
\newtheorem{assumption}[lemma]{Assumption}


\begin{document}

\title[Isotropic \texorpdfstring{$Q$}{Q}-fractional Brownian motion on the sphere]{
Isotropic \texorpdfstring{$Q$}{Q}-fractional Brownian motion on the sphere: regularity and fast simulation}

\author[A.~Lang]{Annika Lang} \address[Annika Lang]{\newline Department of Mathematical Sciences
	\newline Chalmers University of Technology \& University of Gothenburg
	\newline S--412 96 G\"oteborg, Sweden.} \email[]{annika.lang@chalmers.se}

\author[B.~M\"uller]{Bj\"orn M\"uller} \address[Bj\"orn M\"uller]{\newline Department of Mathematical Sciences
	\newline Chalmers University of Technology \& University of Gothenburg
	\newline S--412 96 G\"oteborg, Sweden.} \email[]{bjornmul@chalmers.se}

\thanks{Acknowledgment. The authors would like to thank Stig Larsson and Ruben Seyer for fruitful discussions and the aonymous referees for helpful comments.
This work was supported in part by the European Union (ERC, StochMan, 101088589), by the Swedish Research Council (VR) through grant no.\ 2020-04170, by the Wallenberg AI, Autonomous Systems and Software Program (WASP) funded by the Knut and Alice Wallenberg Foundation, and by the Chalmers AI Research Centre (CHAIR).
Views and opinions expressed are however those of the author(s) only and
do not necessarily reflect those of the European Union or the European Research Council Executive Agency.
Neither the European Union nor the granting authority can be held responsible for them.}

\subjclass{60H35, 33C55, 65M70, 60G15, 60G60, 60G17, 41A25}
\keywords{Fractional Brownian motion, Gaussian processes, \KL expansion, $d$-dimensional sphere, spherical harmonic functions, spectral methods, circulant embedding, FFT, conditionalized random midpoint displacement, strong convergence}

\begin{abstract}
As an extension of isotropic Gaussian random fields and $Q$-Wiener processes on $d$-dimensional spheres, isotropic $Q$-fractional Brownian motion is introduced and sample H\"older regularity in space-time is shown depending on the regularity of the spatial covariance operator~$Q$ and the Hurst parameter~$H$. The processes are approximated by a spectral method in space for which strong and almost sure convergence are shown. The underlying sample paths of fractional Brownian motion are simulated by circulant embedding or conditionalized random midpoint displacement. Temporal accuracy and computational complexity are numerically tested, the latter matching the complexity of simulating a $Q$-Wiener process if allowing for a temporal error.
\end{abstract}

\maketitle

\section{Introduction}\label{sec:intro}

The approximation of stochastic partial differential equations (SPDEs) and corresponding error analysis have been performed for the last 25~years to efficiently compute solutions to models under uncertainty. In most models, the equations are driven by Wiener processes, which yield SPDE solutions with H\"older regularity in time limited by~1/2. One option to get more flexible smoothness in time is to consider infinite-dimensional fractional Brownian motions. Theoretical results on the properties of solutions are available in Euclidean space and in abstract Hilbert and Banach spaces, see \cite{issoglioCylindricalFractionalBrownian2014} for an overview. At the same time, analysis and numerical approximations on non-Euclidean domains are still rare. Motivated by applications in environmental modeling and astrophysics, first analysis and approximations for fractional equations on the sphere have been considered in~\cite{ABOW18, LX19}. An overview over space-time models in Euclidean space and on the sphere is given in~\cite{porcu30YearsSpace2021}.

In this work, we take a step back to carefully analyze and efficiently simulate fractional Brownian motion on spheres in any dimension as an important building block and input for the later simulation of SPDEs. In the first part, we construct isotropic $Q$-fractional Brownian motion with varying space regularity described by the covariance operator~$Q$ based on the Hilbert-space framework of~\cite{greckschQfractionalBrownianMotion2009,duncanFractionalBrownianMotion2002}, and the theory of isotropic Gaussian random fields on spheres developed in~\cite{MP11,langIsotropicGaussianRandom2015}. We show the existence of a continuous modification with optimal H\"older regularity in space, depending on~$Q$, and in time, bounded by the Hurst parameter~$H \in (0,1)$.

In the second part, we approximate $Q$-fractional Brownian motion by a spectral method in space and show strong and almost sure convergence with rates determined by the smoothing properties of~$Q$.
The temporal behavior is then determined by independent sample paths of real-valued fractional Brownian motion.
For their simulation, we exploit an exact method using circulant embedding and fast Fourier transforms with computational complexity~$\cO(N \log N)$ in the number of time steps~$N$ (cf., e.g.,~\cite{perrinFastExactSynthesis2002} and references therein) and compare it to an approximate method, called conditionalized random midpoint displacement, of computational complexity~$\cO(N)$~\cite{norrosSimulationFractionalBrownian1999}. The latter allows to generate the correlated increments of fractional Brownian motion with the same asymptotic speed as the independent increments of Brownian motion. Therefore, we achieve the same complexity for the simulation of $Q$-fractional Brownian motion as for $Q$-Wiener processes on the sphere. We compare their performance also with respect to the constants in the $\cO$-notation and numerically show the decay rate of the error of the midpoint displacement method. In Figure~\ref{fig:sample_QfBm}, we show sample paths generated with the same noise for Hurst parameter $H=0.1,0.5,0.9$ at times $T= 1, 2, 3$. We observe that while the spatial regularity is similar, the temporal behavior depends on $H$. For $H=0.1$, the correlation between temporal increments is negative, so the process stays close to $0$ for large $T$. For $H=0.9$, this correlation is positive, so we observe a consistent temporal trend. In the middle row, $H=0.5$ is the standard $Q$-Wiener process with independent increments.

\begin{figure}[tb]
	\centering
	\subfigure[$H=0.1$, $T=1$.]{
		\includegraphics[width=0.3\textwidth,trim={30em 15em 30em 15em},clip]{./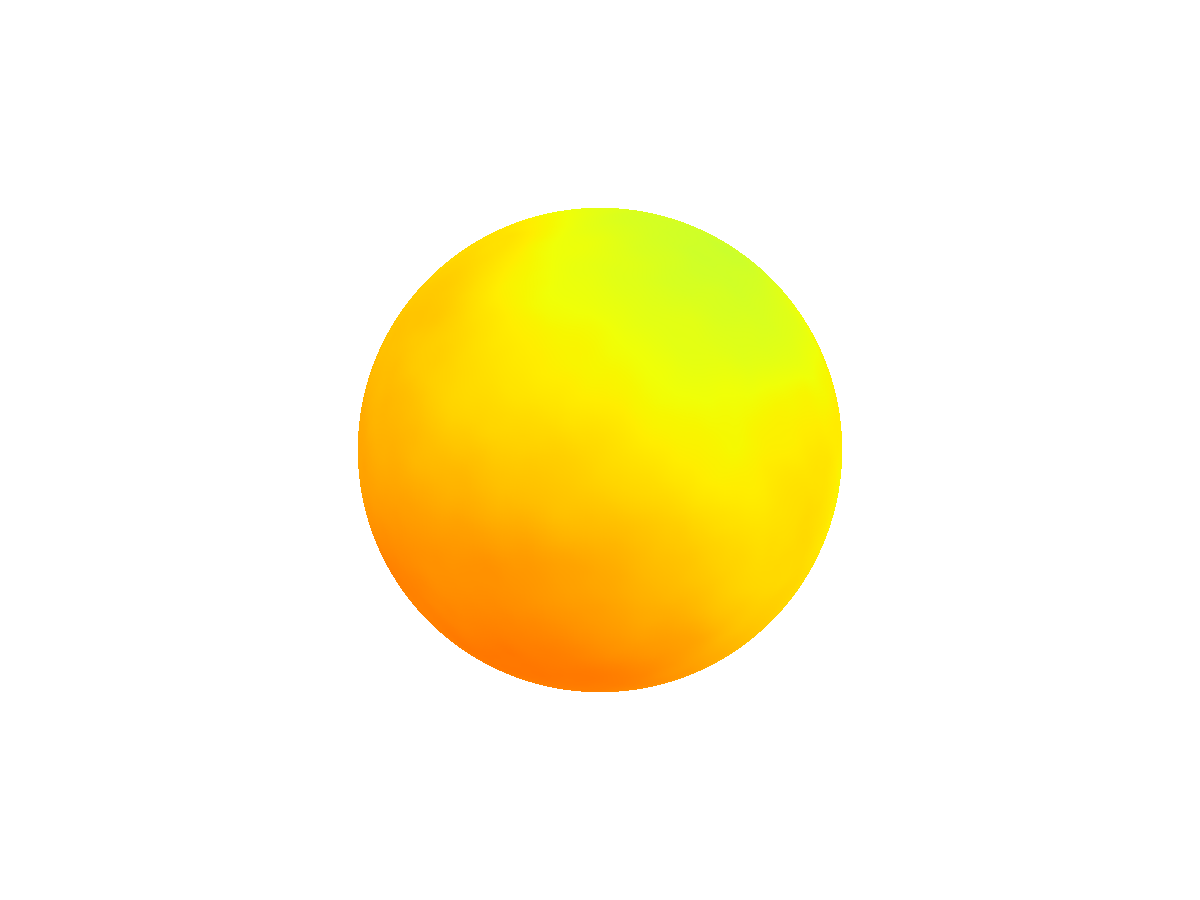}}
	\subfigure[$H=0.1$, $T=2$.]{
		\includegraphics[width=0.3\textwidth,trim={30em 15em 30em 15em},clip]{./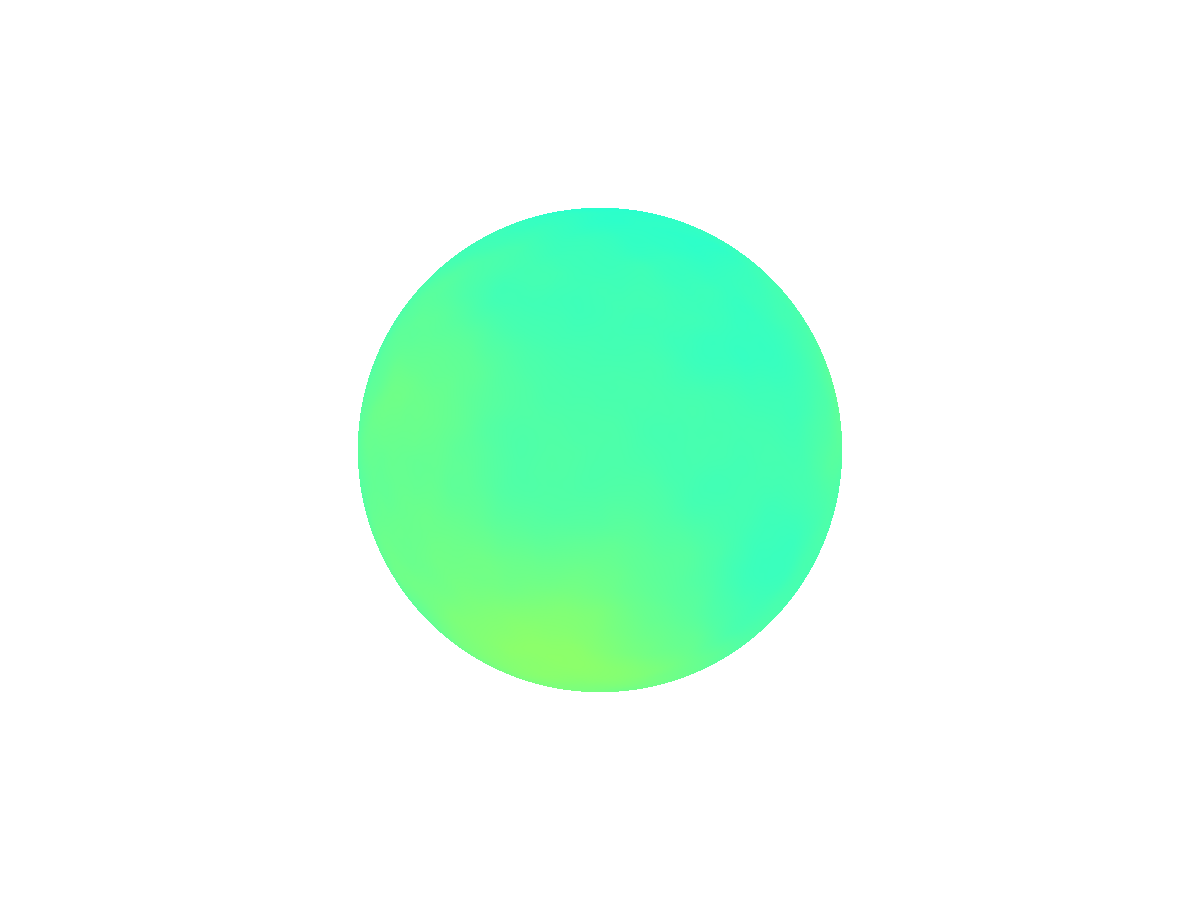}}
	\subfigure[$H=0.1$, $T=3$.]{
		\includegraphics[width=0.3\textwidth,trim={30em 15em 30em 15em},clip]{./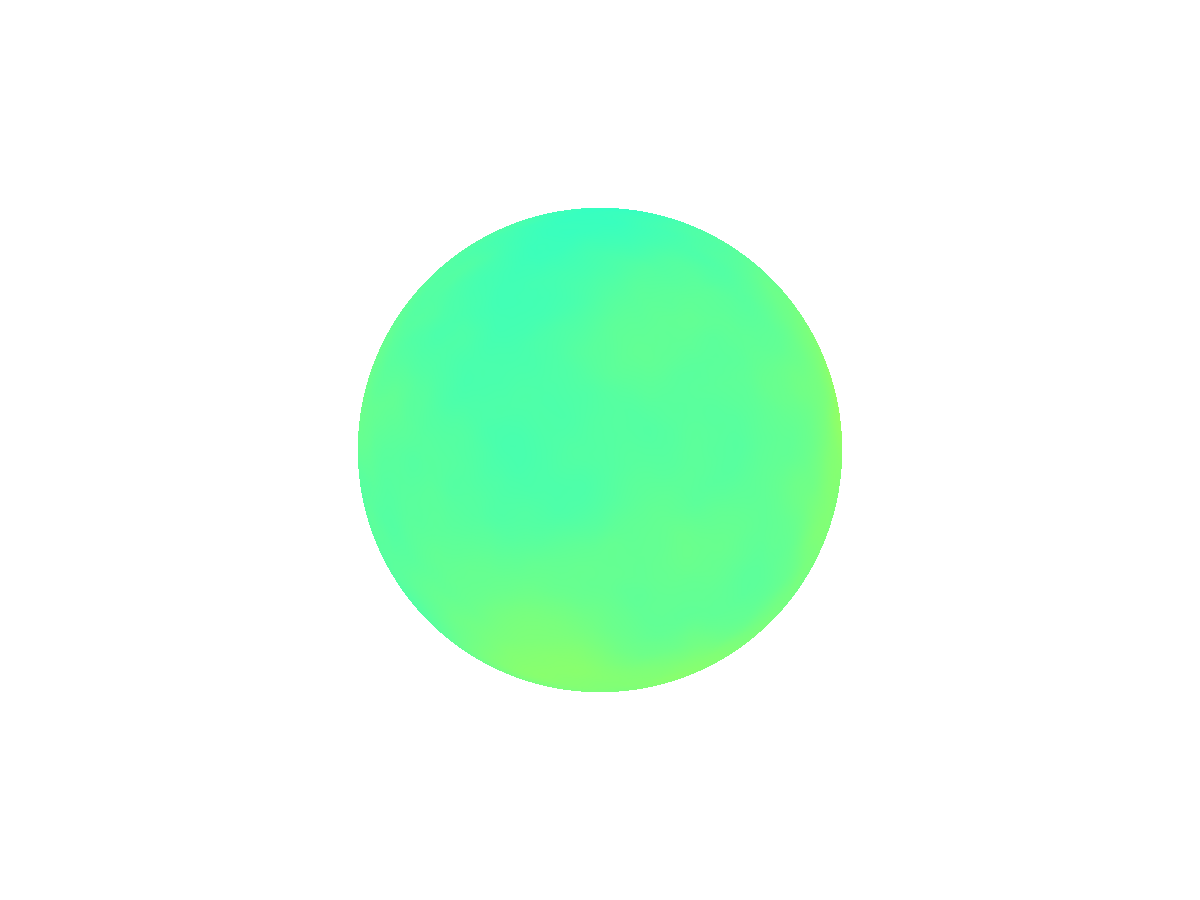}}\\
	\subfigure[$H=0.5$, $T=1$.]{
		\includegraphics[width=0.3\textwidth,trim={30em 15em 30em 15em},clip]{./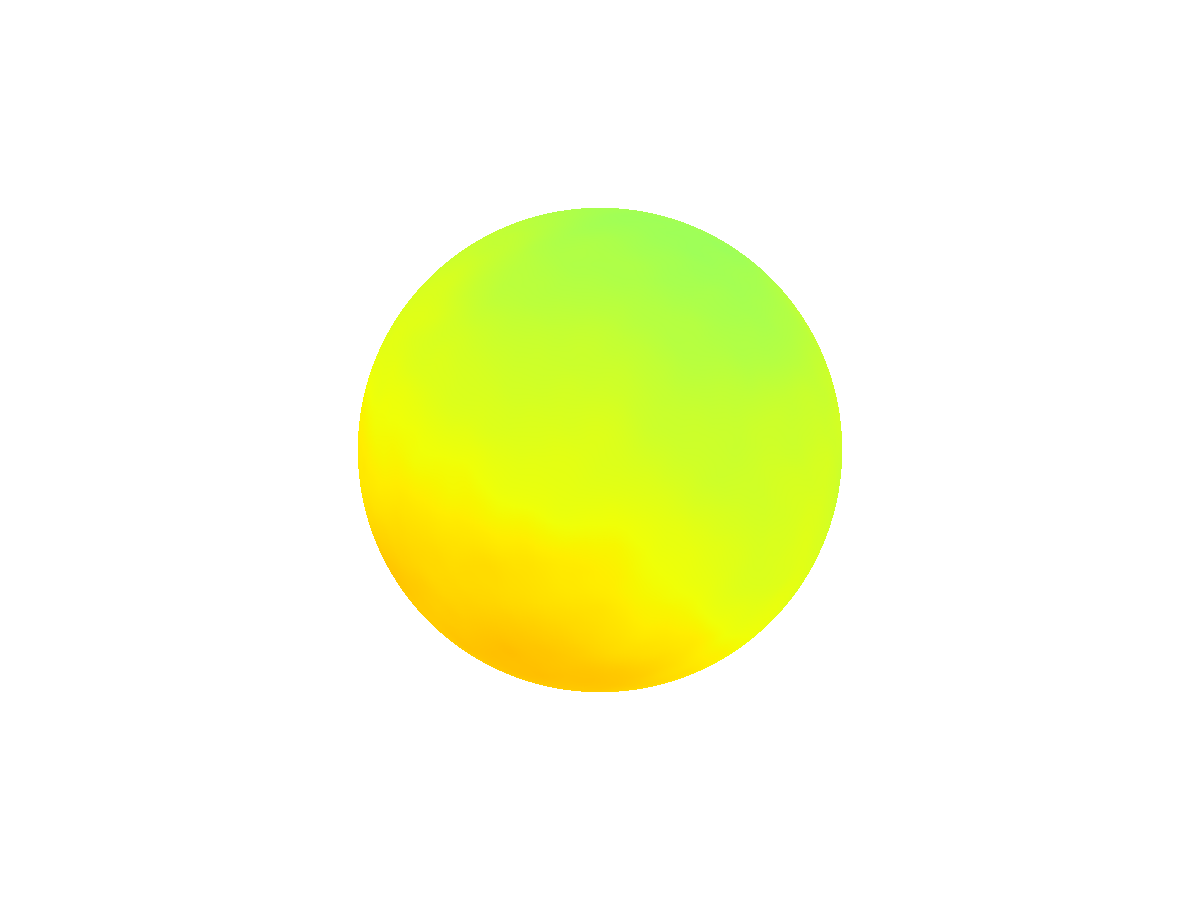}}
	\subfigure[$H=0.5$, $T=2$.]{
		\includegraphics[width=0.3\textwidth,trim={30em 15em 30em 15em},clip]{./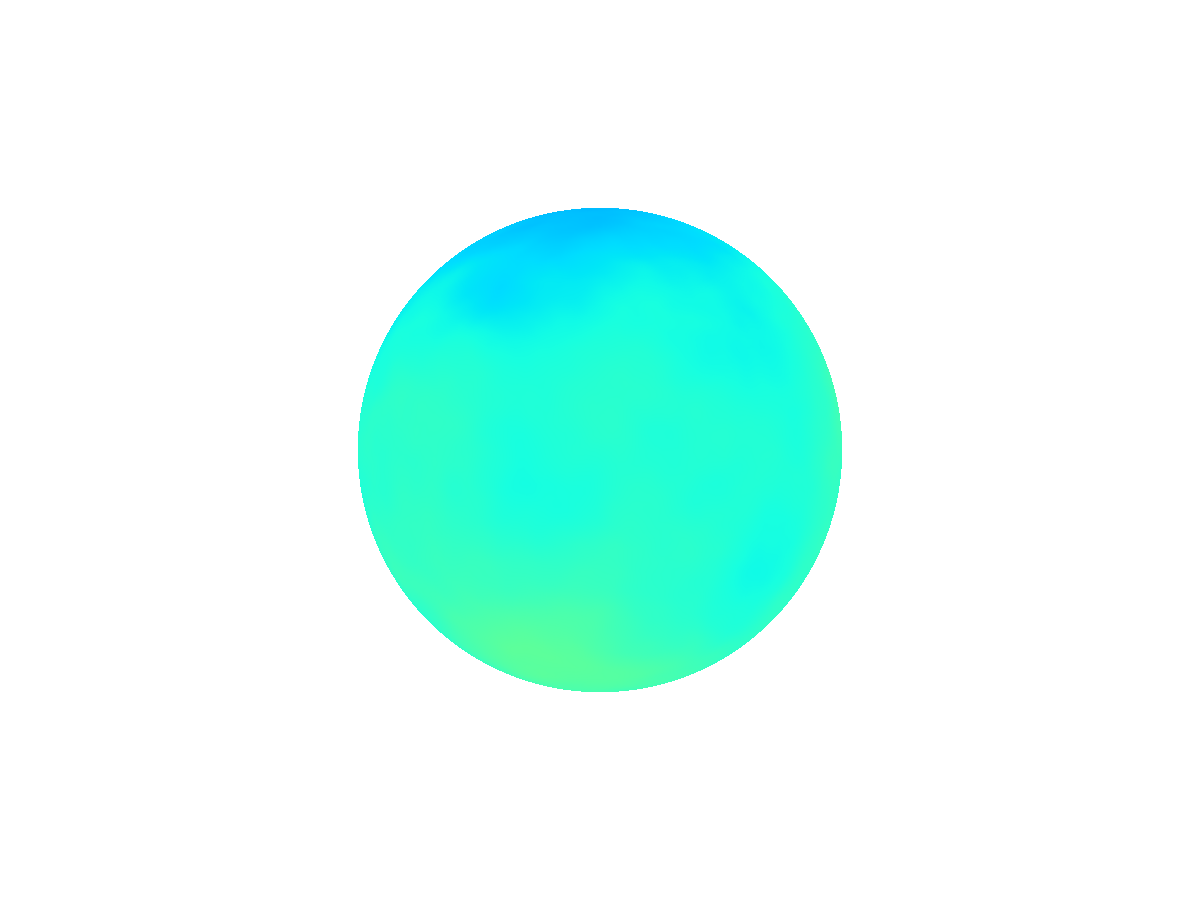}}
	\subfigure[$H=0.5$, $T=3$.]{
		\includegraphics[width=0.3\textwidth,trim={30em 15em 30em 15em},clip]{./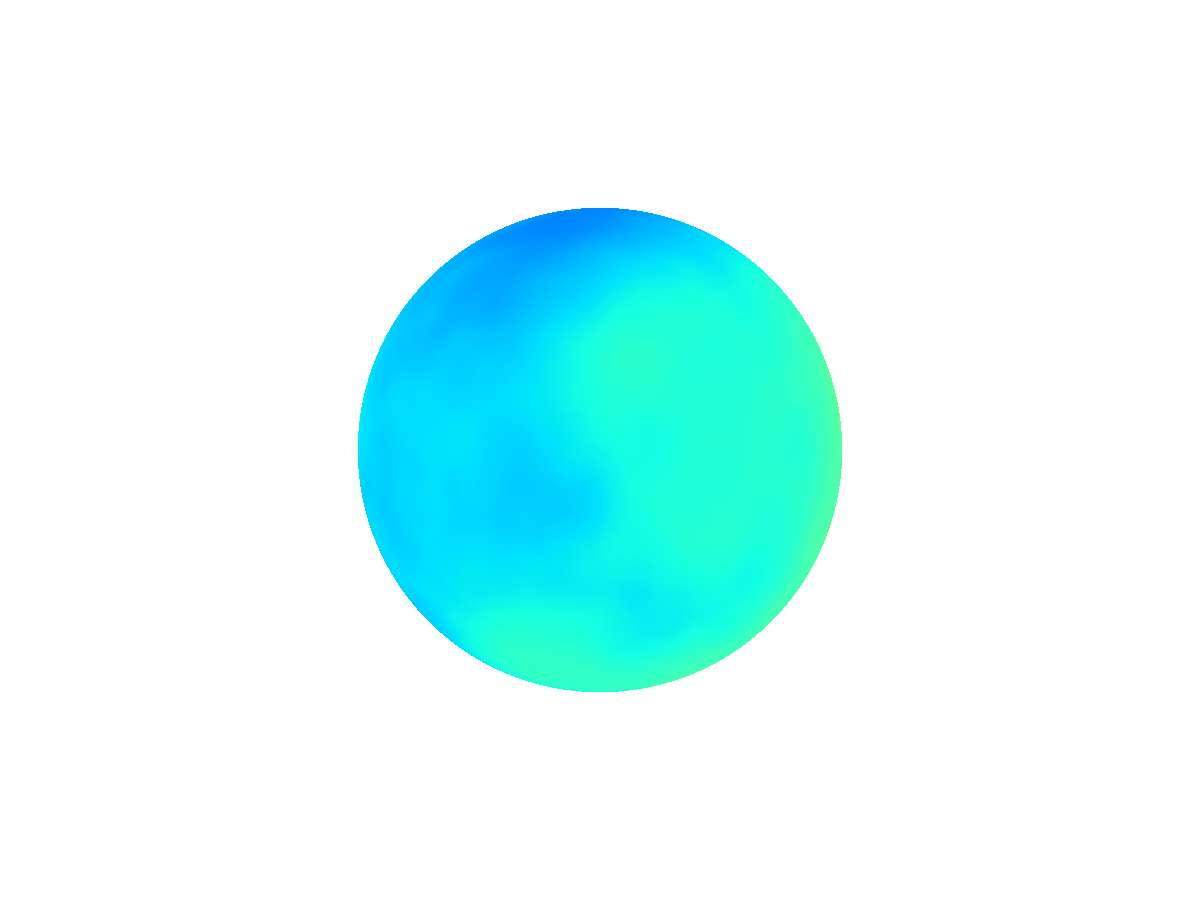}}\\
	\subfigure[$H=0.9$, $T=1$.]{
		\includegraphics[width=0.3\textwidth,trim={30em 15em 30em 15em},clip]{./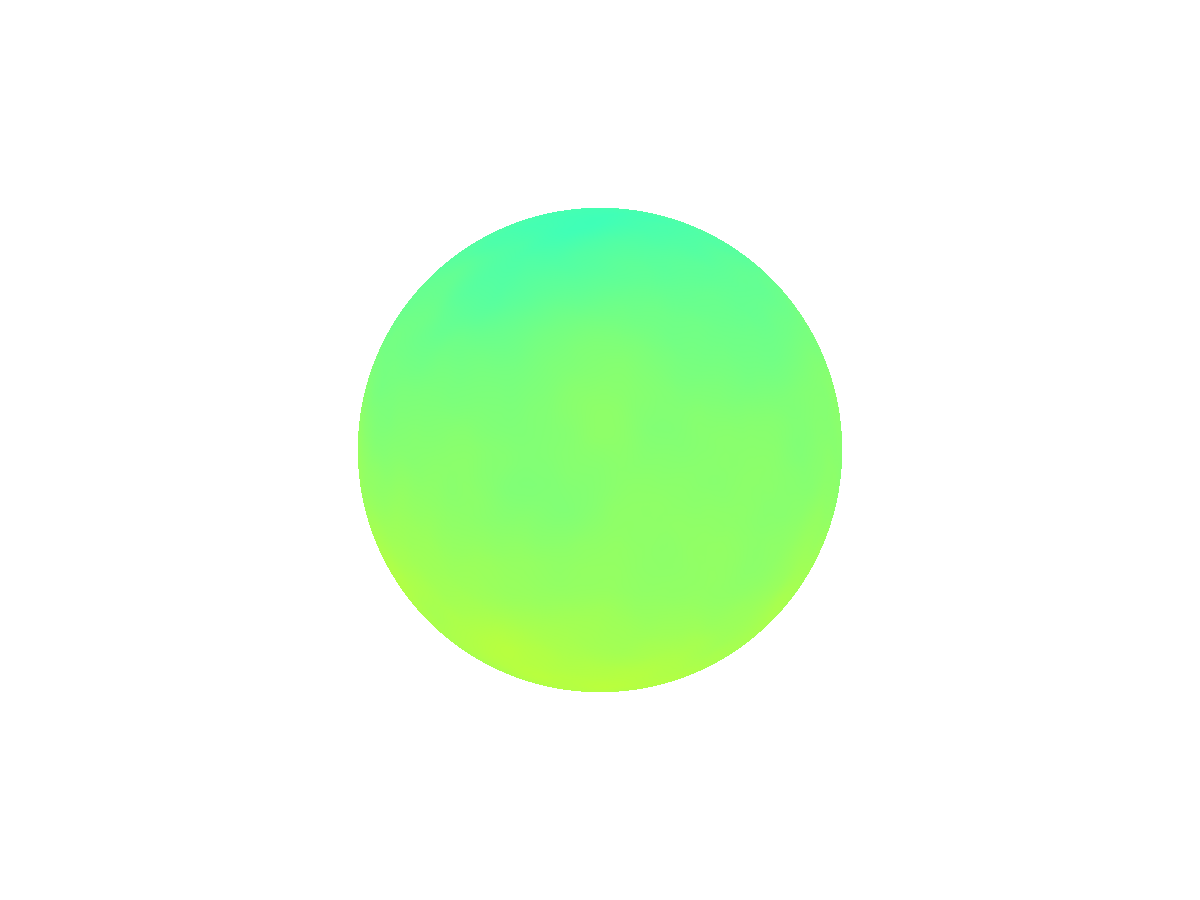}}
	\subfigure[$H=0.9$, $T=2$.]{
		\includegraphics[width=0.3\textwidth,trim={30em 15em 30em 15em},clip]{./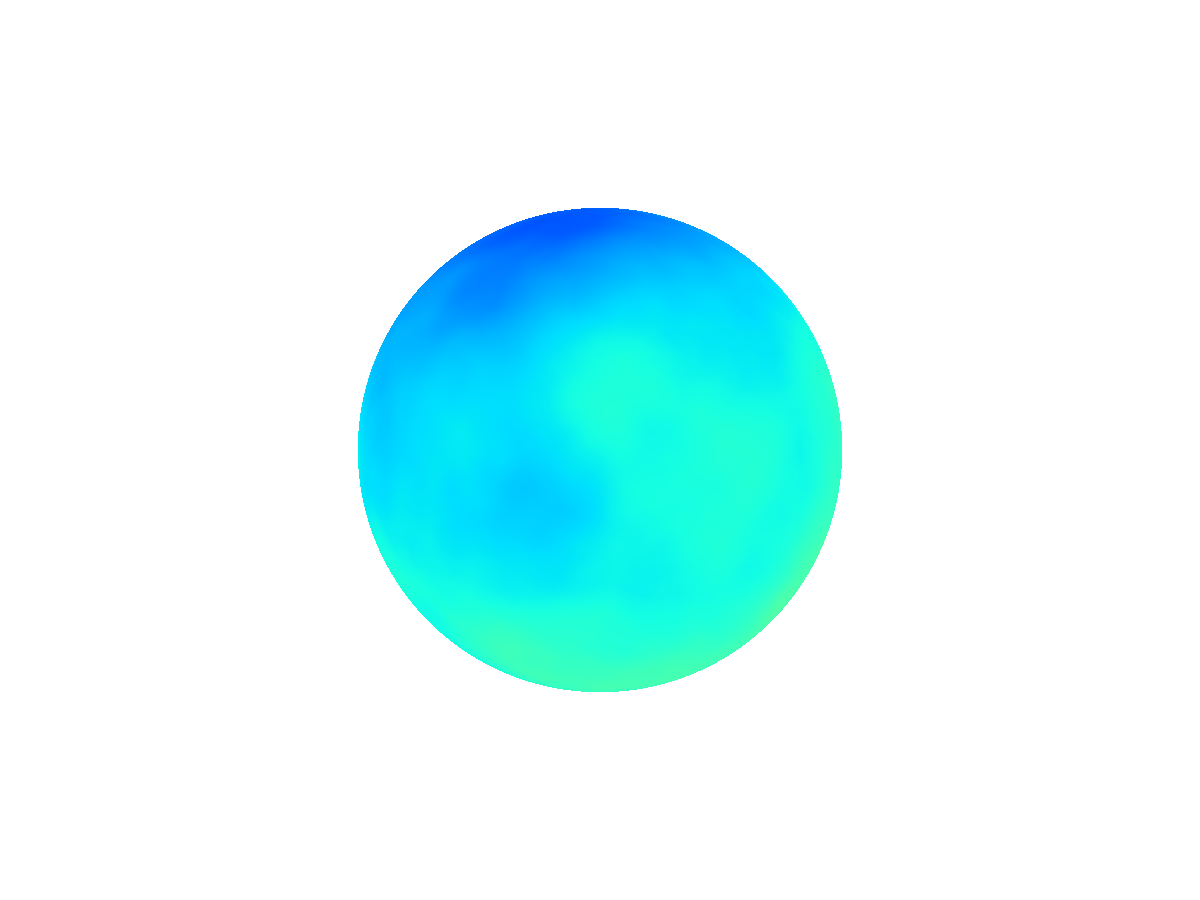}}
	\subfigure[$H=0.9$, $T=3$.]{
		\includegraphics[width=0.3\textwidth,trim={30em 15em 30em 15em},clip]{./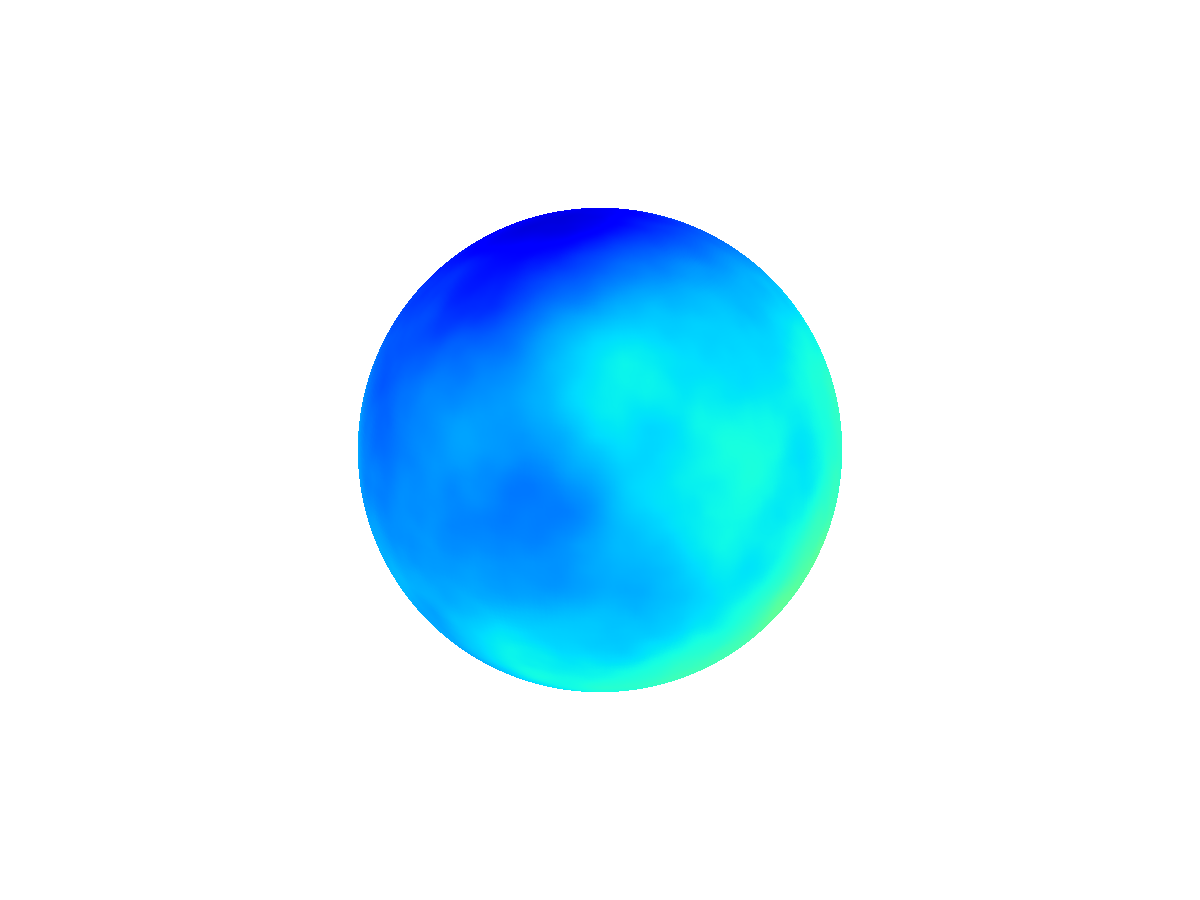}}
	\caption{Samples of $Q$-fBm for $H=0.1, 0.5, 0.9$ at time $T= 1, 2, 3$.\label{fig:sample_QfBm}}
\end{figure}

This article is organized as follows: In Section~\ref{sec:GRF_realfBm}, we shortly introduce the necessary background on real-valued fractional Brownian motion and Gaussian random fields on the unit sphere~$\IS^2$, to then define $Q$-fractional Brownian motion on~$\IS^2$ and analyze its space-time regularity in Section~\ref{sec:QfBm}. Section~\ref{sec:QfBm_Sd} contains the generalization of the results to $d$-dimensional spheres. The second part of the paper in Section~\ref{sec:simulation_QfBm} introduces a fully discrete approximation by a spectral method in space and circulant embedding or conditionalized random midpoint displacement in time. Strong and almost sure errors are analyzed and the performance and accuracy is shown numerically. The code that was used to generate the samples and numerical examples is available at~\cite{LM24_code}.

\section{Real-valued stochastic processes and spherical Gaussian random fields} \label{sec:GRF_realfBm}

$Q$-fractional Brownian motion on the sphere is a space-time stochastic process, which is constructed based on properties of a spatial Gaussian random field on the sphere and real-valued fractional Brownian motions. In this section, we first introduce the temporal processes and the spatial fields separately with their properties as basis for the $Q$-fractional Brownian motion in the next section.

Let us consider stochastic processes on the probability space $(\Omega, \cA, \IP)$ and on the finite time interval $\IT = [0,T]$. We recall that a \emph{real-valued fractional Brownian motion} (fBm) $\beta^H$ with Hurst parameter $H \in (0,1)$ is a continuous Gaussian process with mean zero and covariance
\begin{equation*}
	\phi_H(s, t) = \E\left[\beta^H(t)\beta^H(s)\right] = \frac{1}{2}(t^{2H} + s^{2H} - \lvert t-s \rvert^{2H}).
\end{equation*}
This process is H\"older continuous of order $\ga \in (0,H)$, which we abbreviate by $H^-$-H\"older continuous or $\beta^H \in C^{H^-}(\IT)$, in what follows.
It generalizes Brownian motion, which we recover for $H=1/2$.

We next consider properties of spatial processes or random fields on the sphere. We follow closely the introduction in \cite{langIsotropicGaussianRandom2015} and denote the \emph{unit sphere} by
\[
	{\IS^{2}} = \{x \in \R^3 | x_1^2 + x_{2}^{2} + x_{3}^{2} = 1\}
\]
and equip it with the \emph{geodesic distance}, defined for all $x, y\in {\IS^{2}}$ by
$d_{\IS^2}(x, y) = \arccos(\langle x, y \rangle_{\R^3})$.

Let $L^2({\IS^{2}})$ be the space of all real-valued square-integrable functions on ${\IS^{2}}$ and use the real-valued spherical harmonic functions $(Y_{\ell,m}, \ell \in \N_0, m = -\ell,\ldots,\ell)$ as orthonormal basis. 
A centered $L^2(\IS^2)$-valued \emph{isotropic Gaussian random field} (GRF)~$Z$ on~$\IS^2$ is given by the basis expansion, or Karhunen--Lo\`eve expansion,
\begin{equation}
	\label{eqn:KaL_spatial}
	Z = \sum_{\ell=0}^{\infty}\sum_{m=-\ell}^{\ell} \sqrt{A_\ell} \, z_{\ell, m} \, Y_{\ell , m},
\end{equation}
where $(A_\ell, {\ell\in\N_0})$, $A_{\ell} \geq 0$ for all $\ell\in\N_0$, is called the \emph{angular power spectrum} and $(z_{\ell , m}, \ell \in \N_0, m=-\ell,\ldots,\ell)$ is a sequence of independent, real-valued standard normally distributed 
random variables, as shown in \cite[Corollary~2.5]{langIsotropicGaussianRandom2015}.
The expansion~\eqref{eqn:KaL_spatial} converges in $L^2(\Omega \times {\IS^{2}})$ and for all $x\in {\IS^{2}}$ in $L^2(\Omega)$ \cite[Theorem~5.13]{MP11}.
The results for the real-valued spherical harmonics follow from the complex-valued expansion by \cite[Lemma~5.1]{langIsotropicGaussianRandom2015}.

The \emph{covariance kernel} of~$Z$ is given by
\begin{equation*}
	\phi_Q(x, y) = \E\left[Z(x)Z(y)\right] = \sum_{\ell=0}^{\infty} \sum_{m=-\ell}^{\ell} A_{\ell}Y_{\ell , m}(x)Y_{\ell , m}(y),
\end{equation*}
and the corresponding nonnegative and self-adjoint covariance operator~$Q$ is characterized by its eigendecomposition
\begin{equation*}
	Q Y_{\ell,m} = A_\ell Y_{\ell,m}
\end{equation*}
with finite trace $\trace Q = \sum_{\ell=0}^\infty (2\ell+1) A_\ell$, since $Z$ is an $L^2(\IS^2)$-valued Gaussian random variable.

While there exists a generalized theory that holds for $Q$ with infinite trace, the corresponding random fields would be of lower regularity than $L^2({\IS^{2}})$.
Instead, we are interested in a higher regularity, namely Hölder regularity, and therefore assume a scale of summability conditions on the angular power spectrum of $Q$, as given in~\cite{langIsotropicGaussianRandom2015}.
\begin{assumption}
\label{assump:summability}
Assume that the angular power spectrum $(A_\ell, \ell\in\N_0)$ of the covariance operator~$Q$ satisfies for some $\eta > 0$ that
$\sum_{\ell=0}^{\infty} A_\ell \ell^{1+\eta} < \infty$.
\end{assumption}
Under this assumption, $Z$ has a continuous modification that is in~$C^{(\eta/2)^-}(\IS^2)$, i.e., there exists a $C^{(\eta/2)^-}(\IS^{2})$-valued random field~$Y$ such that $\IP(Z(x) = Y(x)) = 1$ for all $x \in \IS^2$, as shown by~\cite{langIsotropicGaussianRandom2015}.
We use, for $\eta > 2$, the standard extension of Hölder spaces to orders greater than 1.

\section{\texorpdfstring{$Q$}{Q}-fractional Brownian motion on the sphere}\label{sec:QfBm}

Combining the temporal properties of real-valued fBm and spatial properties of isotropic GRFs on~$\IS^2$, we are now ready to define $Q$-fractional Brownian motion on~$L^2(\IS^2)$ following~\cite{ABOW18, greckschQfractionalBrownianMotion2009}.

\begin{definition}
	\label{def:Q_fbm}
	An $L^2(\IS^2)$-valued continuous Gaussian process $(B_{Q}^{H}(t))_{t\in\IT}$ with Hurst parameter $H \in (0, 1)$ is called a \emph{(standard) isotropic $Q$-fractional Brownian motion} ($Q$-fBm), if there exists an operator $Q$ satisfying Assumption~\ref{assump:summability}, such that for all $u, v \in L^2(\IS^2)$ and $s, t \in \IT$, $\E [\langle B_{Q}^{H}(t), u \rangle_{L^2(\IS^2)} ] = 0$ and
		\begin{equation*}
		\E\left[\langle B_{Q}^{H}(t), u \rangle_{L^2(\IS^2)} \langle B_{Q}^{H}(s), v\rangle_{L^2(\IS^2)}\right] 
		= \phi_H(t,s) \langle Qu, v \rangle_{L^2(\IS^2)}.
		\end{equation*}
\end{definition}

By the definition, we see that $B_Q^H$ is centered and the covariance splits into the temporal properties of real-valued fBm and the spatial description of isotropic GRFs on~$\IS^2$.
This becomes even more evident when citing existence and uniqueness of $Q$-fBm and its series expansion from~\cite{duncanFractionalBrownianMotion2002,greckschParabolicStochasticDifferential1999,greckschQfractionalBrownianMotion2009}.

\begin{theorem}
	\label{thm:existence}
	Let $Q$ satisfy Assumption~\ref{assump:summability} and $H \in (0, 1)$. Then, $Q$-fBm exists with basis expansion 
	\begin{equation*}
		B_{Q}^{H}(t) = \sum_{\ell=0}^{\infty}\sum_{m=-\ell}^{\ell} \sqrt{A_\ell}\beta_{\ell, m}^H(t)Y_{\ell , m},
	\end{equation*}
	where $(\beta_{\ell , m}^H, {\ell \in \N_0, m=-\ell,\ldots,\ell})$ is a sequence of independent real-valued fBms with Hurst parameter~$H$.
	Furthermore, $B_{Q}^{H} \in C^{H^-}(\IT; L^2(\IS^2))$.
\end{theorem}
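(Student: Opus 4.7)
The plan is to construct $B_Q^H$ directly as the limit of partial sums of the proposed basis expansion, verify the Gaussian covariance structure prescribed by Definition~\ref{def:Q_fbm}, and then upgrade mere existence to a continuous modification with the claimed temporal regularity via the Kolmogorov continuity theorem for Hilbert-space-valued Gaussian processes.

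\textbf{Step 1: Convergence of the series.} Take $(\beta_{\ell,m}^H, \ell\in\N_0, m=-\ell,\ldots,\ell)$ to be a family of independent real-valued fBms with Hurst parameter $H$ on $(\Omega,\cA,\IP)$ and set
\begin{equation*}
	B_{Q,L}^H(t) := \sum_{\ell=0}^{L}\sum_{m=-\ell}^{\ell} \sqrt{A_\ell}\,\beta_{\ell,m}^H(t)\,Y_{\ell,m}.
\end{equation*}
For fixed $t\in\IT$ and $L<L'$, orthonormality of $(Y_{\ell,m})$, independence of the $\beta_{\ell,m}^H$, and $\E[(\beta_{\ell,m}^H(t))^2]=t^{2H}$ yield
\begin{equation*}
	\E\|B_{Q,L'}^H(t)-B_{Q,L}^H(t)\|_{L^2(\IS^2)}^2 = t^{2H}\sum_{\ell=L+1}^{L'}(2\ell+1)A_\ell,
\end{equation*}
which tends to zero as $L,L'\to\infty$ because Assumption~\ref{assump:summability} implies in particular $\trace Q=\sum_\ell(2\ell+1)A_\ell<\infty$. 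Hence $(B_{Q,L}^H(t))_L$ is Cauchy in $L^2(\Omega;L^2(\IS^2))$ and we define $B_Q^H(t)$ as its limit.

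\textbf{Step 2: Covariance identity.} The limit process is Gaussian since each $B_{Q,L}^H(t)$ is, and Gaussianity is preserved under $L^2$-limits. For $u,v\in L^2(\IS^2)$ write $u=\sum_{\ell,m}u_{\ell,m}Y_{\ell,m}$ and similarly for $v$. Using again independence of the $\beta_{\ell,m}^H$ and $\E[\beta_{\ell,m}^H(t)\beta_{\ell',m'}^H(s)]=\phi_H(t,s)\delta_{\ell\ell'}\delta_{mm'}$, I get
\begin{equation*}
	\E\bigl[\langle B_Q^H(t),u\rangle\langle B_Q^H(s),v\rangle\bigr] = \phi_H(t,s)\sum_{\ell,m}A_\ell u_{\ell,m}v_{\ell,m} = \phi_H(t,s)\langle Qu,v\rangle,
\end{equation*}
matching Definition~\ref{def:Q_fbm}. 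Centeredness is immediate.

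\textbf{Step 3: Continuous modification and H\"older regularity.} For $s,t\in\IT$ the same orthonormality computation gives
\begin{equation*}
	\E\|B_Q^H(t)-B_Q^H(s)\|_{L^2(\IS^2)}^2 = |t-s|^{2H}\sum_{\ell=0}^\infty(2\ell+1)A_\ell = |t-s|^{2H}\trace Q.
\end{equation*}
Since $B_Q^H(t)-B_Q^H(s)$ is a centered Gaussian vector in $L^2(\IS^2)$, equivalence of Gaussian moments yields, for every $p\geq 2$, a constant $C_p$ with
\begin{equation*}
	\E\|B_Q^H(t)-B_Q^H(s)\|_{L^2(\IS^2)}^p \le C_p (\trace Q)^{p/2}\,|t-s|^{pH}.
\end{equation*}
Choosing $p$ arbitrarily large makes $pH-1$ arbitrarily close to $pH$, so the Kolmogorov--Chentsov theorem in a Banach space (see, e.g., the standard Hilbert-space version) provides a continuous modification whose sample paths lie in $C^{\alpha}(\IT;L^2(\IS^2))$ for every $\alpha<H-1/p$, and hence in $C^{H^-}(\IT;L^2(\IS^2))$ after letting $p\to\infty$.

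The main subtlety, rather than any single obstacle, is keeping the two randomness layers (independent temporal fBms and spatial basis) cleanly separated so that the computations in Steps~1--2 are justified in $L^2(\Omega;L^2(\IS^2))$ rather than merely pointwise in $x\in\IS^2$; this is why Assumption~\ref{assump:summability}, through $\trace Q<\infty$, is invoked already for the $t$-fixed Cauchy estimate and not only for the spatial regularity result used earlier.
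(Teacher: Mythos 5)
Your proof is correct, and it is more self-contained than the paper's, which handles existence and the series expansion essentially by citation: the paper invokes the constructions of Duncan et al.\ and Grecksch--Anh for the expansion $B_Q^H(t)=\sum_{\ell,m}\sqrt{A_\ell}\,\beta^H_{\ell,m}(t)Y_{\ell,m}$, and then only supplies a remark extending those results (stated there for $H>1/2$) to all $H\in(0,1)$. That remark is precisely your Step~3: the bound $\E\lVert B_Q^H(t)-B_Q^H(s)\rVert^{2n}_{L^2(\IS^2)}\le C_n(\trace Q)^n|t-s|^{2nH}$ via equivalence of Gaussian moments (the paper cites da Prato--Zabczyk, Proposition~2.19), followed by Kolmogorov--Chentsov with $2nH>1$ and $n\to\infty$ to get $C^{H^-}(\IT;L^2(\IS^2))$. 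What your route buys is a single uniform argument valid for all $H\in(0,1)$ from the start (the trace-class Cauchy estimate in Step~1 and the covariance verification in Step~2 nowhere use $H>1/2$), at the cost of redoing work the paper outsources; what the paper's route buys is brevity. Two cosmetic points: in Step~1 it is worth noting explicitly that Assumption~\ref{assump:summability} with $\eta>0$ gives $\trace Q=\sum_\ell(2\ell+1)A_\ell<\infty$ (which you do state), and in Step~3 the phrase ``makes $pH-1$ arbitrarily close to $pH$'' should just be the requirement $pH>1$ so that the Hölder exponent $H-1/p$ is admissible; since continuous modifications are indistinguishable, letting $p\to\infty$ indeed yields one modification in $C^\alpha(\IT;L^2(\IS^2))$ for every $\alpha<H$.
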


We remark that \cite{duncanFractionalBrownianMotion2002} and \cite{greckschParabolicStochasticDifferential1999} only state the existence for $H>1/2$ but the existence proof of~\cite{duncanFractionalBrownianMotion2002} extends to all $H \in (0,1)$ since $Q$-fBm is a Gaussian process. 
Thus, the Kolmogorov--Chentsov theorem is still applicable for $H\leq 1/2$, since for all $n \in \N, s, t \in \IT$
\begin{equation*}
	\E\left[\lVert B_{Q}^{H}(t) - B_{Q}^{H}(s) \rVert_{L^2(\IS^2)}^{2n}\right] \leq \lvert t-s \rvert^{2nH}C_n(\trace Q)^n
\end{equation*}
for some constant~$C_n$ by \cite[Proposition~2.19]{dapratoStochasticEquationsInfinite2014}. Choosing $n$ such that $2nH > 1$, this allows us to apply \cite[Theorem~4.23]{kallenbergFoundationsModernProbability2021}.

We note that the series expansion in Theorem~\ref{thm:existence} matches for $H= 1/2$ the expansion of an isotropic $Q$-Wiener process on~$\IS^2$ as introduced in~\cite{langIsotropicGaussianRandom2015}.

Having considered $Q$-fBm so far as $L^2(\IS^2)$-valued, i.e., function-valued over~$\IS^2$, we are next interested in the spatial properties and in $Q$-fBm as a space-time process. For this, we first observe that
\begin{equation}\label{eq:KL_QfBm_x}
	B_{Q}^{H}(t, x) = \sum_{\ell=0}^{\infty} \sum_{m=-\ell}^{\ell} \sqrt{A_{\ell}} \beta_{\ell, m}^H(t)Y_{\ell, m}(x)
\end{equation}
is an isotropic Gaussian random field for fixed~$t$ (see~\cite{langIsotropicGaussianRandom2015}) that converges in $L^2(\gO;\R)$ pointwise in~$x$ by a version of the Peter--Weyl theorem, see~\cite[Theorem~5.13]{MP11}. It follows then that $B_{Q}^{H}$ is a Gaussian process on $\IT \times \IS^2$ since the linear combination $\sum_{k=1}^{n} \alpha_k B_{Q}^{H}(t_k, x_k)$ is Gaussian for any coefficients $(\alpha_k, k=1, \dots, n)$ and $((t_k,x_k), k=1,\dots, n)$ given the independent Gaussian processes~$\beta_{\ell , m}^H$.

We compute, as in \cite{ABOW18}, the covariance kernel~$k$ from~\eqref{eq:KL_QfBm_x}
\begin{align*}
	k(t, x, s, y) 
	& = \E\left[ B_{Q}^{H}(t, x) B_{Q}^{H}(s, y)\right]
	= \sum_{\ell=0}^{\infty} \sum_{m=-\ell}^{\ell} A_{\ell} \E\left[\beta_{\ell, m}^H(t)\beta_{\ell, m}^H(s)\right]Y_{\ell, m}(x)   Y_{\ell, m}(y) \\
	& = \phi_H(t, s) \phi_Q(x, y)
\end{align*}
with $\phi_H$ and $\phi_Q$ given in Section~\ref{sec:GRF_realfBm}.

Let us denote by $C^{\alpha, \beta}(\IT\times \IS^{2})$ the subspace of functions $f \in C^{\min\{\alpha,\beta\}}(\IT\times \IS^{2})$ such that for all $x \in \IS^2$, $f(\cdot,x) \in C^\alpha(\IT)$, and for all $t \in \IT$, $f(t,\cdot) \in C^\beta(\IS^2)$. Note that we interpret $\min\{\alpha^-, \beta^-\}$ as $\min\{\alpha, \beta\}^-$. We are now ready to state our main result on the space-time regularity of~$B_Q^H$.
	
\begin{theorem}\label{thm:fBm_space_time_reg}
	Let $Q$ satisfy Assumption~\ref{assump:summability}, then $B_{Q}^{H}$ has a continuous modification on $\IT \times \IS^{2}$ which is in $C^{H^-, (\eta/2)^-}(\IT\times \IS^{2})$.
\end{theorem}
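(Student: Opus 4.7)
The plan is to prove the second-moment bound
$$\E\bigl[|B_Q^H(t,x)-B_Q^H(s,y)|^2\bigr] \lesssim |t-s|^{2H} + d_{\IS^2}(x,y)^{\eta\wedge 2},$$
promote it to arbitrary even moments using Gaussianity, and apply a Kolmogorov--Chentsov argument on the product space $\IT\times\IS^2$ to get both joint continuity and the separate Hölder exponents in the statement.

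For the variance bound, I would use the $L^2(\Omega)$ triangle inequality to split the increment into a temporal piece at fixed $x$ and a spatial piece at fixed $s$. Substituting the series expansion~\eqref{eq:KL_QfBm_x} into the temporal piece, independence of the scalar fBms $\beta_{\ell,m}^H$ together with $\E[(\beta_{\ell,m}^H(t)-\beta_{\ell,m}^H(s))^2]=|t-s|^{2H}$ and the addition formula $\sum_{m=-\ell}^\ell Y_{\ell,m}(x)^2=(2\ell+1)/(4\pi)$ yield $(\trace Q/(4\pi))\,|t-s|^{2H}$, which is finite by Assumption~\ref{assump:summability}. For the spatial piece, after the deterministic rescaling by $s^H$ the map $x\mapsto B_Q^H(s,x)$ is an isotropic GRF on $\IS^2$ with angular power spectrum $(A_\ell)$, so the variance estimate behind the Hölder result of~\cite{langIsotropicGaussianRandom2015} cited before Section~\ref{sec:QfBm} yields the claimed spatial contribution in the regime $\eta\leq 2$.

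Since $B_Q^H(t,x)-B_Q^H(s,y)$ is centered Gaussian, its $2n$-th moment is at most $C_n$ times the $n$-th power of its variance. Equipping $\IT\times\IS^2$ with the product metric $\rho((t,x),(s,y)) = |t-s|^H \vee d_{\IS^2}(x,y)^{(\eta\wedge 2)/2}$, the moment bound reads $\E[|B_Q^H(t,x)-B_Q^H(s,y)|^{2n}] \leq C_n\,\rho((t,x),(s,y))^{2n}$. For $n$ chosen large enough, the Kolmogorov--Chentsov theorem on $(\IT\times\IS^2,\rho)$ produces a jointly continuous modification whose modulus with respect to $\rho$ is Hölder of exponent arbitrarily close to $1$. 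Unpacking in the original coordinates gives Hölder exponent $H^-$ in $t$ uniformly in $x$ and $(\eta/2)^-$ in $x$ uniformly in $t$, which is precisely the $C^{H^-,(\eta/2)^-}$ assertion.

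The main obstacle is the regime $\eta>2$, where the target spatial exponent $(\eta/2)^-$ exceeds $1$ and Kolmogorov--Chentsov alone cannot reach it. Following the strategy of~\cite{langIsotropicGaussianRandom2015}, I would differentiate the spherical-harmonic expansion in space term by term; derivatives of $Y_{\ell,m}$ introduce only polynomial factors in $\ell$ that are still absorbed by the summability in Assumption~\ref{assump:summability}, and applying the same Kolmogorov--Chentsov machinery to the differentiated fields then delivers Hölder control of the top spatial derivative of order $(\eta/2-\lfloor\eta/2\rfloor)^-$. This derivative-level bookkeeping, rather than the Gaussian moment estimate itself, is the delicate ingredient.
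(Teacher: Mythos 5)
Your moment estimates coincide with the paper's: the temporal increment is bounded via independence of the $\beta^H_{\ell,m}$ and $\phi_Q(x,x)<\infty$, and the spatial increment via the GRF estimate of \cite{langIsotropicGaussianRandom2015} applied to the rescaled slice, exactly as in the proof of Corollary~\ref{cor:joint_continuous}. Where you genuinely diverge is the continuity machinery. The paper applies the Kolmogorov--Chentsov result (Theorem~\ref{thm:KC_kratschmer_Gaussian}) with the isotropic product metric $d_M$, which only yields the joint exponent $\min\{H,\eta/2\}^-$, and then upgrades the two marginal exponents separately through Lemma~\ref{lem:pointwise_hoelder_time}, finishing with a density-and-continuity argument to control the $t$- and $x$-dependent null sets introduced by the slice-wise modifications. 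Your anisotropic metric $\rho((t,x),(s,y))=\lvert t-s\rvert^{H}\vee d_{\IS^2}(x,y)^{(\eta\wedge 2)/2}$ short-circuits this in the regime $\eta\le 2$: one application of a Kolmogorov--Chentsov theorem on $(\IT\times\IS^2,\rho)$ gives a modification that is $\rho$-H\"older of every order below $1$, hence simultaneously $H^-$-H\"older in $t$ uniformly in $x$ and $(\eta/2)^-$-H\"older in $x$ uniformly in $t$, with no null-set bookkeeping at all; the joint $\min\{H,\eta/2\}^-$ regularity required by the definition of $C^{H^-,(\eta/2)^-}$ also follows since the space is bounded. This is a cleaner route in that regime, but note that Theorem~\ref{thm:KC_kratschmer_Gaussian} as stated (compact Riemannian manifold of dimension $3$ with metric $d_M$, exponent $\xi\le 1$) does not literally apply to the snowflaked space $(\IT\times\IS^2,\rho)$; you must invoke a covering-number (metric-entropy) version of Kolmogorov--Chentsov, e.g.\ the general form behind \cite{kratschmerKolmogorovChentsovType2023}, with the effective dimension $1/H+4/(\eta\wedge 2)$ replacing $3$, and check its hypotheses.

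The genuine gap is the case $\eta>2$, which carries the main content of the theorem there (spatial slices of H\"older order exceeding $1$) and which your final paragraph only sketches. Your Kolmogorov--Chentsov step caps the spatial exponent at $1^-$, and "differentiate the spherical-harmonic expansion term by term" leaves unaddressed precisely the points that make this delicate: in what sense the differentiated series converges and why it represents the spatial derivatives of the already-chosen jointly continuous modification (an interchange of differentiation and summation, or an almost-sure uniform convergence argument, is needed), and how the Kolmogorov--Chentsov argument for the differentiated field is combined with the undifferentiated one without reintroducing $t$-dependent exceptional sets. Alternatively, one can do what the paper does: for fixed $t$ the slice is $t^{H}$ times an isotropic GRF with angular power spectrum $(A_\ell)$, so \cite{langIsotropicGaussianRandom2015} gives, for each $t$, an indistinguishable slice modification in $C^{(\eta/2)^-}(\IS^2)$ -- but then the null set depends on $t$, and upgrading to a statement about one fixed space-time modification for all $t$ is exactly the issue the paper's proof resolves via a dense subset of $\IT$ and continuity; your proposal never confronts it. So the argument is essentially complete (modulo citing the correct metric-space Kolmogorov--Chentsov statement) for $\eta\le 2$, but for $\eta>2$ it is an outline of a known-to-be-delicate argument rather than a proof.
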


To prove this theorem, we will first need to prove joint continuity in space-time with non-optimal parameters. 
For this, consider the compact Riemannian manifold $M= \IT\times {\IS^{2}}$ of dimension 3 equipped with the (topological) product metric 
\[d_M((t, x), (s, y)) = \lvert t - s \rvert + d_{\IS^{2}}(x, y)\]
for all $(t, x), (s, y) \in \IT\times {\IS^{2}}$. By \cite[Remark~2]{kratschmerKolmogorovChentsovType2023}, the Kolmogorov--Chentsov Theorem~1.1 in \cite{kratschmerKolmogorovChentsovType2023} applied to a Gaussian process on~$M$ becomes:

\begin{theorem}
	\label{thm:KC_kratschmer_Gaussian}
	Let $Z$ be a centered Gaussian process indexed by~$M$.
	Assume there exist $C > 0$ and $\xi \le 1$ such that for all $(t, x), (s, y) \in M$,
	\begin{equation}
		\label{eqn:std_bound}
		\E\left[\lvert Z(t, x)-Z(s, y)\rvert^2\right]^{1/2} \leq C d_M((t, x), (s, y))^\xi.
	\end{equation}
	Then, $Z$ has a continuous modification on~$M$, which is in~$C^{\xi^-}(M)$.
\end{theorem}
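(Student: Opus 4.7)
The plan is to derive the result as a Gaussian specialization of the general Kolmogorov--Chentsov moment criterion on compact metric spaces, exploiting that for a Gaussian process the single variance bound \eqref{eqn:std_bound} already controls moments of all orders. First I would observe that for any $(t,x), (s,y) \in M$ the increment $Z(t,x) - Z(s,y)$ is itself a centered real-valued Gaussian random variable, being a linear combination of values of the Gaussian process $Z$. Its moments are therefore tied to its variance through the Gaussian identity $\E[\lvert X\rvert^{2n}] = (2n-1)!! \, \E[X^2]^n$, so combining with \eqref{eqn:std_bound} yields for every $n \in \N$ that
\[
\E\left[\lvert Z(t,x) - Z(s,y)\rvert^{2n}\right] \le (2n-1)!! \, C^{2n} \, d_M\bigl((t,x),(s,y)\bigr)^{2n\xi}.
\]

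Next I would apply the Kolmogorov--Chentsov theorem of Kr\"atschmer--Urusov (\cite[Theorem~1.1]{kratschmerKolmogorovChentsovType2023}) to the compact metric space $(M, d_M)$. The manifold $M = \IT \times \IS^2$ has topological dimension~3 and, with its product metric built from a bounded interval and the geodesic metric on $\IS^2$, its upper box-counting dimension is also~3, so the covering/entropy condition in the criterion is satisfied with exponent~3. Applied with moment order $2n$ and right-hand exponent $2n\xi$, the theorem produces a continuous modification of $Z$ that is H\"older of any order strictly less than $(2n\xi - 3)/(2n) = \xi - 3/(2n)$ on $M$.

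Finally, I would use that continuous modifications on a compact metric space are essentially unique: any two agree pointwise outside a null event on a dense countable set, and then everywhere by continuity. Hence a single continuous modification inherits the H\"older exponents afforded by the criterion applied with arbitrarily large~$n$. Given any target $\alpha \in (0,\xi)$, picking $n > 3/(2(\xi-\alpha))$ ensures $\xi - 3/(2n) > \alpha$, so the modification lies in $C^\alpha(M)$, and ranging over $\alpha \uparrow \xi$ places it in $C^{\xi^-}(M)$; this is the content of Remark~2 of \cite{kratschmerKolmogorovChentsovType2023} in our setting. The main obstacle is verifying Kr\"atschmer--Urusov's geometric/entropy hypothesis for $(M, d_M)$, since $\IS^2$ is equipped with its non-Euclidean geodesic metric; however, compactness of $M$ together with the local bi-Lipschitz equivalence of $d_M$ to a Riemannian product metric transports the standard Euclidean covering estimates and closes this step.
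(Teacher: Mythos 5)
Your proposal is correct and follows essentially the same route as the paper's proof: bound all (even) moments of the Gaussian increments via the variance estimate \eqref{eqn:std_bound}, invoke the Kolmogorov--Chentsov theorem of Kr\"atschmer--Urusov on the compact three-dimensional space $(M,d_M)$, and let the moment order tend to infinity to reach every H\"older exponent below $\xi$. The only differences are cosmetic (even moments with the double-factorial constant instead of general $p$-th moments, and an explicit indistinguishability argument that the paper leaves implicit when letting $p\to\infty$).
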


The proof uses a standard argument to compute $p$-th moments of Gaussian random variables based on the variance. For completeness, we give the proof in Appendix~\ref{app:proofs_KC}.

Applying this theorem to~$B_{Q}^{H}$, we obtain the following result.
\begin{corollary}
	\label{cor:joint_continuous}
	Assume that $Q$ satisfies Assumption~\ref{assump:summability}.
	Then, $B_{Q}^{H}$ has a continuous modification on~$M$, which is in $C^{\min\{H,\eta/2\}^-}(M)$.
\end{corollary}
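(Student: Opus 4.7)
The plan is to apply Theorem \ref{thm:KC_kratschmer_Gaussian} to the centered Gaussian process $B_Q^H$ indexed by $M = \IT \times \IS^2$, with H\"older exponent $\xi := \min\{H, \eta/2\}$. Since $H \in (0,1)$, we have $\xi < 1$, so the range requirement $\xi \leq 1$ of the theorem is automatically met, and the task reduces to verifying the variance bound \eqref{eqn:std_bound}.

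I would split the full increment into a pure temporal and a pure spatial piece,
$$
B_Q^H(t,x) - B_Q^H(s,y) = \bigl(B_Q^H(t,x) - B_Q^H(s,x)\bigr) + \bigl(B_Q^H(s,x) - B_Q^H(s,y)\bigr),
$$
and bound each $L^2(\gO)$-norm separately before recombining via the triangle inequality. For the temporal piece at fixed $x$, substituting the series expansion from Theorem \ref{thm:existence}, using independence of $(\beta_{\ell,m}^H)$ across $(\ell,m)$ together with the real-valued fBm covariance $\E[(\beta_{\ell,m}^H(t) - \beta_{\ell,m}^H(s))^2] = \lvert t-s\rvert^{2H}$, and collapsing the $m$-sum via the addition theorem $\sum_m Y_{\ell,m}(x)^2 = (2\ell+1)/(4\pi)$, yields
$$
\E\bigl[\lvert B_Q^H(t,x) - B_Q^H(s,x)\rvert^2\bigr] = \lvert t-s\rvert^{2H} \sum_{\ell=0}^{\infty} A_\ell \frac{2\ell+1}{4\pi},
$$
and the series converges under Assumption \ref{assump:summability}.

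For the spatial piece, observe that $B_Q^H(s,\cdot)$ is distributed like $s^H Z$ for the isotropic GRF $Z$ of Section \ref{sec:GRF_realfBm}, so I can directly import the variance bound on $\E[\lvert Z(x) - Z(y)\rvert^2]$ worked out in \cite{langIsotropicGaussianRandom2015}. The ingredients are $\sum_m (Y_{\ell,m}(x) - Y_{\ell,m}(y))^2 = \frac{2\ell+1}{2\pi}(1 - P_\ell(\cos d_{\IS^2}(x,y)))$ together with the interpolation estimate $\lvert 1 - P_\ell(\cos\theta)\rvert \leq C_\alpha \ell^{2\alpha}\theta^{2\alpha}$ for $\alpha \in [0,1]$, obtained by combining the trivial bound $\lvert 1 - P_\ell\rvert \leq 2$ with the quadratic bound near $\theta = 0$. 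Taking $\alpha = \xi$ is legitimate because $2\xi \leq \eta$ and $\xi \leq 1$, so both the Legendre estimate and the finiteness of $\sum_\ell A_\ell \ell^{1+2\xi}$ under Assumption \ref{assump:summability} are guaranteed; the prefactor $s^{2H}$ is harmless on the compact interval $\IT$.

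Combining the two bounds through the concavity estimate $a^\xi + b^\xi \leq 2(a+b)^\xi$, valid for $\xi \in (0,1]$ and $a,b \geq 0$, gives
$$
\E\bigl[\lvert B_Q^H(t,x) - B_Q^H(s,y)\rvert^2\bigr]^{1/2} \leq C\bigl(\lvert t-s\rvert + d_{\IS^2}(x,y)\bigr)^{\xi} = C\, d_M((t,x),(s,y))^\xi,
$$
and Theorem \ref{thm:KC_kratschmer_Gaussian} immediately delivers the continuous modification in $C^{\xi^-}(M)$. The only delicate step is aligning the interpolation parameter $\alpha$ with the target exponent $\xi$ so that the Legendre bound and the angular power spectrum assumption are simultaneously usable; once one writes down $\xi = \min\{H, \eta/2\}$ this matching is essentially forced and the rest is routine bookkeeping.
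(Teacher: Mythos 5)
Your proposal is correct and follows essentially the same route as the paper: the same temporal/spatial splitting of the increment, the same spherical-harmonic estimates from the isotropic GRF theory for the spatial part, the concavity recombination, and the application of Theorem~\ref{thm:KC_kratschmer_Gaussian}. The only cosmetic difference is that you re-derive the spatial variance bound with the Legendre interpolation parameter set directly to $\xi$, whereas the paper cites the ready-made bound with exponent $\min\{\eta,2\}$ and downgrades it afterwards; both are equivalent here.
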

\begin{proof}
We start the proof by splitting
\begin{align*}
	& \E\left[\left(B_{Q}^{H}(t, x) - B_{Q}^{H}(s,y)\right)^2\right]^{1/2}\\
		& \quad \le \E\left[\left(B_{Q}^{H}(t, x) - B_{Q}^{H}(s,x)\right)^2\right]^{1/2} + \E\left[\left(B_{Q}^{H}(s, x) - B_{Q}^{H}(s,y)\right)^2\right]^{1/2}.
\end{align*}
The first term satisfies
\begin{align*}
	\E\left[\left(B_{Q}^{H}(t, x) - B_{Q}^{H}(s,x)\right)^2\right]
		= \left(\phi_\beta(t, t) + \phi_\beta(s, s) - 2\phi_\beta(t, s)\right)\phi_Q(x,x)
		\le C_Q \lvert t -s \rvert^{2H},
\end{align*}
where $C_Q = \phi_Q(x,x) < \infty$ is constant since $B_{Q}^{H}$ is isotropic.
The second term is the increment of a Gaussian random field with angular power spectrum $(\phi_\beta(s,s)A_\ell, {\ell \in \N_0})$, which by \cite[Lemma~4.3]{langIsotropicGaussianRandom2015} and $\phi_\beta(s,s) = s^{2H} \le T^{2H}$ is bounded by
\begin{align*}
	\E\left[\left(B_{Q}^{H}(s, x) - B_{Q}^{H}(s,y)\right)^2\right]
		\le C_\eta \phi_\beta(s,s) d_{\IS^{2}}(x,y)^{\min\{\eta,2\}}
		\le C_\eta T^{2H} d_{\IS^{2}}(x,y)^{\min\{\eta,2\}}.
\end{align*}
Setting $\zeta = \min\{H,\eta/2\} < 1$, we obtain, since $z^\zeta$ is concave, for some constants $\tilde{C}$ and~$C$
\begin{equation*}
	\E\left[\left(B_{Q}^{H}(t, x) - B_{Q}^{H}(s,y)\right)^2\right]^{1/2}
		\le \tilde{C} \left(\lvert t -s \rvert^{\zeta} + d_{\IS^{2}}(x,y)^\zeta\right)
		\le C d_M((t,x), (s, y))^\zeta,
\end{equation*}
and applying Theorem~\ref{thm:KC_kratschmer_Gaussian} finishes the proof.	
\end{proof}

Without loss of generality, we denote by $B_{Q}^{H}$ this unique continuous modification.
For now, we have found the best possible (joint) Hölder exponent if we take the underlying space to be $(M, d_M)$. The next lemma will be used in the proof of Theorem~\ref{thm:fBm_space_time_reg} to obtain the ideal exponent for space and time separately.

\begin{lemma}
	\label{lem:pointwise_hoelder_time}
	Assume that $Q$ satisfies Assumption~\ref{assump:summability}.
	Then, for all $t \in \IT$, $B_{Q}^{H}(t, \cdot)$ has an indistinguishable modification that is in~$C^{(\eta/2)^-}(\IS^2)$, and similarly, for all $x \in \IS^2$, $B_{Q}^{H}(\cdot, x)$ has an indistinguishable modification that is in~$C^{H^-}(\IT)$.
\end{lemma}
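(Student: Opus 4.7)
The plan is to treat both assertions symmetrically. For each fixed slice I would apply a one-parameter Kolmogorov--Chentsov style result to obtain a modification with the sharp Hölder exponent in that single variable, and then invoke the joint continuity already supplied by Corollary~\ref{cor:joint_continuous} to promote this modification to an indistinguishable one.

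For the temporal statement, fix $x \in \IS^2$. By Definition~\ref{def:Q_fbm} and isotropy, $\phi_Q(x,x)$ is a finite constant independent of~$x$, and a direct computation yields
\[
	\E\bigl[(B_Q^H(t,x) - B_Q^H(s,x))^2\bigr] = \phi_Q(x,x)\,|t-s|^{2H}.
\]
Theorem~\ref{thm:KC_kratschmer_Gaussian}, applied with the manifold~$M$ replaced by~$\IT$ and with $\xi = H$, then provides a continuous modification of $B_Q^H(\cdot,x)$ whose paths lie in~$C^{H^-}(\IT)$. For the spatial statement, fix $t \in \IT$. The slice $B_Q^H(t,\cdot)$ is a centered isotropic Gaussian random field on~$\IS^2$ with angular power spectrum $(t^{2H} A_\ell)_{\ell \in \N_0}$, which trivially inherits Assumption~\ref{assump:summability} from $(A_\ell)$ since $t \leq T$. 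The Hölder regularity result from~\cite{langIsotropicGaussianRandom2015} recalled right after Assumption~\ref{assump:summability} then produces a modification of $B_Q^H(t,\cdot)$ whose paths lie in~$C^{(\eta/2)^-}(\IS^2)$.

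The only delicate step, and the one worth flagging as the main obstacle, is upgrading \emph{modification} to \emph{indistinguishable}. This is where the joint regularity of Corollary~\ref{cor:joint_continuous} is indispensable: outside a single null set of~$\Omega$, every path of $B_Q^H$ is continuous on~$M$, and in particular every temporal slice and every spatial slice is continuous. Since $\IT$ and $\IS^2$ are separable metric spaces, two continuous modifications of the same process agree on a countable dense subset off a null set and, by continuity, everywhere off that null set. Applying this to the pair consisting of the modification just constructed and the slice of~$B_Q^H$ itself gives indistinguishability in each case, which is what the lemma asserts.
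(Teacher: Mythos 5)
Your proposal is correct and follows essentially the same route as the paper: the spatial slice is handled by the Hölder regularity result of~\cite{langIsotropicGaussianRandom2015} for isotropic GRFs, the temporal slice by the bound $\E[(B_Q^H(t,x)-B_Q^H(s,x))^2] \le \phi_Q(x,x)\,|t-s|^{2H}$ together with a Kolmogorov--Chentsov argument, and indistinguishability by combining the joint continuity of Corollary~\ref{cor:joint_continuous} with the fact that two continuous modifications on a separable space coincide off a single null set. The only cosmetic difference is that you invoke Theorem~\ref{thm:KC_kratschmer_Gaussian} with $M$ replaced by $\IT$, whereas the paper applies the standard one-dimensional Kolmogorov--Chentsov theorem of Kallenberg after the Gaussian moment bound; both yield the same $C^{H^-}(\IT)$ conclusion.
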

\begin{proof}
	Since two continuous modifications are indistinguishable, the first claim follows from \cite[Theorem~4.6]{langIsotropicGaussianRandom2015}.
	In the proof of Corollary~\ref{cor:joint_continuous}, we showed that
	\begin{equation*}
		\E\left[(B_{Q}^{H}(t, x) - B_{Q}^{H}(s,x))^2\right] \leq C_Q \lvert t -s \rvert^{2H}.
	\end{equation*}
	Combining this with bounds of the $p$-th moments of Gaussian distributions 
	as in the proof of Theorem~\ref{thm:KC_kratschmer_Gaussian} and applying \cite[Theorem~4.23]{kallenbergFoundationsModernProbability2021} yields the claim.
\end{proof}

\begin{remark}
	Given the continuity, we conclude that for a fixed~$x$, $B_{Q}^{H}(\cdot, x)$ is a rescaled real-valued fBm since it is a Gaussian process satisfying $\E[B_{Q}^{H}(t, x)] = 0$ and $\E[B_{Q}^{H}(t,x) B_{Q}^{H}(s,x)] = \phi_\beta(t, s) \phi_Q(x, x)$.
\end{remark}

Now we have all results at hand to prove our main result on the space-time regularity of~$B^H_Q$.

\begin{proof}[Proof of Theorem~\ref{thm:fBm_space_time_reg}]	
The theorem is now a direct consequence of Corollary~\ref{cor:joint_continuous} and Lemma~\ref{lem:pointwise_hoelder_time}. The only remaining concern is if the indistinguishable process in Lemma~\ref{lem:pointwise_hoelder_time} with the higher regularity than in Corollary~\ref{cor:joint_continuous} introduces intractable null sets depending on $t$ or~$x$, respectively. However, since $B_{Q}^{H}$ is space-time continuous, the union of these null sets is another null set. This is shown by considering the null set obtained on a dense subset of $\IT$ or~$\IS^2$, respectively, and exploiting continuity. Therefore, we obtain an indistinguishable modification that is in~$C^{H^-, (\eta/2)^-}(\IT\times \IS^{2})$.
\end{proof}

\section{$Q$-fractional Brownian motion on \texorpdfstring{${\IS^{d-1}}$}{the d-dimensional hypersphere}}\label{sec:QfBm_Sd}
Analogous results hold when considering the hypersphere ${\IS^{d-1}}$ in $\R^d$ instead of $\IS^2$. In the framework of \cite{Y83}, we denote the real-valued spherical harmonics on~$\IS^{d-1}$ by  $(S^{(d-1)}_{\ell , m}, \ell \in \N_0, m=1,\ldots,h(\ell,d))$ with $h(\ell, d) = (2\ell + d - 2)\cdot(\ell + d - 3)!/((d-2)!\ell!)$.

Let $(\beta_{\ell, m}^H, \ell \in \N_0, m=1,\ldots,h(\ell,d))$ be a sequence of independent real-valued fBms. Assuming $\sum_{\ell=0}^{\infty}h(\ell, d)A_{\ell} < \infty$, we obtain, combining the results on~$\IS^2$ in Section~\ref{sec:QfBm} with Karhunen--Lo\`eve expansions on~$\IS^{d-1}$ for isotropic GRFs from~\cite{langIsotropicGaussianRandom2015}, the expansion
\begin{equation}\label{eqn:KL_Sd}
	B_{Q}^{H}(t) 
	= \sum_{\ell=0}^{\infty} \sum_{m=1}^{h(\ell, d)} \sqrt{A_{\ell}} \beta_{\ell, m}^H(t)S^{(d-1)}_{\ell, m}.
\end{equation}
To substitute $\IS^2$ by $\IS^{d-1}$ in Section~\ref{sec:QfBm}, we only need to apply the corresponding results for~$\IS^{d-1}$ from~\cite{langIsotropicGaussianRandom2015}. For that the generalized version of Assumption~\ref{assump:summability} becomes 
\begin{assumption}
	\label{assump:summability_d}
	Assume that the angular power spectrum $(A_\ell, {\ell\in\N_0})$ of the covariance operator $Q$ in~${\IS^{d-1}}$ satisfies for some $\eta > 0$ that
	$\sum_{\ell=0}^{\infty} A_\ell \ell^{d-2+\eta} < \infty$.
\end{assumption}
Replacing $d_{\IS^{2}}$ by $d_{\IS^{d-1}}$ and applying \cite[Theorem~4.7]{langIsotropicGaussianRandom2015}, the regularity results in Theorem~\ref{thm:fBm_space_time_reg} extend to $Q$-fBm on~$\IS^{d-1}$, which we state for completeness in the following theorem.
	\begin{theorem}
	Let $Q$ satisfy Assumption~\ref{assump:summability_d}, then $B_{Q}^{H}$ has a continuous modification which is in $C^{H^-, (\eta/2)^-}(\IT\times \IS^{d-1})$.
	\end{theorem}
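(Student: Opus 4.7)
The plan is to mimic the proof of Theorem~\ref{thm:fBm_space_time_reg} with $\IS^2$ replaced by $\IS^{d-1}$ throughout, using Assumption~\ref{assump:summability_d} in place of Assumption~\ref{assump:summability} and invoking the $\IS^{d-1}$-versions of the results of~\cite{langIsotropicGaussianRandom2015}. First I would establish joint continuity on the compact Riemannian manifold $M = \IT \times \IS^{d-1}$ of dimension~$d$ with a suboptimal exponent, and then sharpen separately in the temporal and spatial directions.

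For the joint continuity step, the covariance of $B_Q^H$ computed from~\eqref{eqn:KL_Sd} retains the product form $k(t,x,s,y) = \phi_H(t,s)\phi_Q(x,y)$. Splitting an $L^2$-increment via the triangle inequality into a purely temporal and a purely spatial contribution, the temporal part is bounded by $C_Q |t-s|^{2H}$, where $C_Q = \phi_Q(x,x) < \infty$ is constant thanks to isotropy and Assumption~\ref{assump:summability_d}. For the spatial part, $B_Q^H(s,\cdot)$ is an isotropic Gaussian random field on~$\IS^{d-1}$ with angular power spectrum $(\phi_H(s,s)A_\ell)_{\ell\in\N_0}$, so \cite[Theorem~4.7]{langIsotropicGaussianRandom2015} combined with $\phi_H(s,s) \le T^{2H}$ yields a bound of the form $C_\eta T^{2H} d_{\IS^{d-1}}(x,y)^{\min\{\eta,2\}}$. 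Setting $\zeta = \min\{H,\eta/2\} < 1$, concavity of $z \mapsto z^\zeta$ converts these into a single estimate in the product metric $d_M((t,x),(s,y)) = |t-s| + d_{\IS^{d-1}}(x,y)$, so Theorem~\ref{thm:KC_kratschmer_Gaussian} yields a continuous modification in $C^{\zeta^-}(M)$.

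To sharpen to the claimed separate exponents, I would repeat the argument of Lemma~\ref{lem:pointwise_hoelder_time}. For each fixed $t \in \IT$, $B_Q^H(t,\cdot)$ is an isotropic Gaussian random field on~$\IS^{d-1}$, so the $\IS^{d-1}$-analogue of~\cite[Theorem~4.6]{langIsotropicGaussianRandom2015} under Assumption~\ref{assump:summability_d} provides an indistinguishable modification in $C^{(\eta/2)^-}(\IS^{d-1})$. For each fixed $x \in \IS^{d-1}$, the temporal variance bound above combined with Gaussian moment estimates and the Kolmogorov--Chentsov theorem~\cite[Theorem~4.23]{kallenbergFoundationsModernProbability2021} yields an indistinguishable modification in $C^{H^-}(\IT)$. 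The null-set union argument from the proof of Theorem~\ref{thm:fBm_space_time_reg} then applies verbatim: the jointly continuous modification on~$M$ allows one to pass to countable dense subsets of $\IT$ and of~$\IS^{d-1}$, so that the union of the countably many exceptional null sets is itself null, giving an indistinguishable modification in $C^{H^-,(\eta/2)^-}(\IT \times \IS^{d-1})$.

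I do not anticipate a serious obstacle, since every ingredient has a direct $\IS^{d-1}$-counterpart in the cited literature. The only point requiring care is verifying that the growth $h(\ell,d) \sim \ell^{d-2}$ of the dimensions of the spaces of spherical harmonics is correctly absorbed by the summability in Assumption~\ref{assump:summability_d}, which is precisely the role of the exponent $d-2+\eta$.
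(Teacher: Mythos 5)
Your proposal is correct and follows essentially the same route as the paper, which simply replaces $d_{\IS^2}$ by $d_{\IS^{d-1}}$, invokes Assumption~\ref{assump:summability_d} in place of Assumption~\ref{assump:summability}, and applies \cite[Theorem~4.7]{langIsotropicGaussianRandom2015} for the spatial increment bound before repeating the argument of Theorem~\ref{thm:fBm_space_time_reg}. Your added remarks on the dimension of $M$ and on the growth $h(\ell,d)\sim\ell^{d-2}$ being absorbed by the exponent $d-2+\eta$ are consistent with the paper's setup.
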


\section{Efficient simulation of \texorpdfstring{$Q$}{Q}-fractional Brownian motion}\label{sec:simulation_QfBm}

In the past sections, we have characterized the regularity properties of $Q$-fBm in terms of its parameters $Q$ and~$H$. 
From the opposite perspective, we can now construct $Q$-fBms with given regularity properties by prescribing $Q$ and $H$ through Theorem~\ref{thm:fBm_space_time_reg}.
To use the process in applications, we need to be able to simulate it.
This section constructs and analyzes an approximation to the expansion~\eqref{eq:KL_QfBm_x} by truncating it and simulating independent sample paths of real-valued fBms.

\subsection{Spectral approximation in space}
\label{subsec:spectral_approx}

We return here to $\IS^2$ and truncate the basis expansion~\eqref{eq:KL_QfBm_x} at the parameter $\kappa \in \N$ to obtain the finite sum
\begin{equation}\label{eq:QfBm_spectral_approx}
  B_{Q}^{H, \kappa}(t, x) = \sum_{\ell=0}^{\kappa}\sum_{m=-\ell}^{\ell} \sqrt{A_\ell}\beta_{\ell, m}^H(t)Y_{\ell , m}(x).
\end{equation}

This sum can be interpreted analogously to a discrete Fourier transform (DFT), where $Y_{\ell , m}$ are the basis functions instead of complex exponentials. 
In fact, it can be rephrased in terms of DFTs and there exist implementations of the so-called \emph{Spherical Harmonics Transform} based on fast Fourier transforms (FFTs).
Since these FFT-based implementations allow for fast evaluation of $Q$-fBm on the sphere, we used them to generate the visualizations in Figure~\ref{fig:sample_QfBm}.

A spatial convergence analysis of the spectral approximation has been performed in \cite[Propositions~5.2 \&~5.3]{langIsotropicGaussianRandom2015} for a time-independent GRF on~${\IS^{2}}$ in $L^2(\gO;L^2(\IS^2))$ and $L^p(\gO;L^2(\IS^2))$ as well as $\IP$-a.s.\ in \cite[Corollary~5.4]{langIsotropicGaussianRandom2015}.
For a fixed $t$, their proofs apply to our situation up to a constant factor of $t^{H} < T^H$, noting that $\E[\beta_{\ell , m}^H(t)^2] = t^{2H}$.
This yields immediately the following theorem.

\begin{theorem}\label{thm:conv_trunc_KLexp_GRF}
  Let the angular power spectrum $(A_\ell, \ell \in \N_0)$ of the covariance operator $Q$
  decay algebraically with order $\ga>2$, i.e.,
  there exist constants $C>0$ and $\ell_0 \in \N$ such that 
  $A_\ell \le C \cdot \ell^{-\ga}$ for all $\ell > \ell_0$. 
  Then, the sequence of approximations $(B_{Q}^{H,\kappa}, \gk \in \N)$ 
  converges to $B_{Q}^{H}$ in~$L^p(\gO;L^2(\IS^2))$ for any finite $p \ge 1$ uniformly in $\IT$, and the error is bounded by
    \begin{equation*}
     \sup_{t \in \IT}\|B_{Q}^{H}(t) - B_{Q}^{H, \kappa}(t)\|_{L^p(\gO;L^2(\IS^2))}
      \le \hat{C}_p \, T^{H}\,  \gk^{-(\ga-2)/2}
    \end{equation*}
  for $\gk \ge \ell_0$, where $\hat{C}_p$ depends on $p$, $C$, and~$\ga$.
  This implies $\IP$-a.s.\ convergence such that for all $\gb < (\ga-2)/2$,
	the error is asymptotically bounded by $\gk^{-\gb}$. I.e., there exists a random variable $\kappa_0(t)$ such that for all $\kappa > \kappa_0(t)$,
\begin{equation*}
	\|B_{Q}^{H}(t) - B_{Q}^{H, \kappa}(t)\|_{L^2(\IS^2)} \le \gk^{-\gb},
	\quad \IP\text{-a.s.}.
\end{equation*}
\end{theorem}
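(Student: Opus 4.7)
The plan is to exploit the fact that for each fixed $t$ the process $B_Q^{H,\kappa}(t)$ is (up to the scale factor $t^H$) a truncated Karhunen--Loève expansion of an isotropic GRF on $\IS^2$ with angular power spectrum $(t^{2H} A_\ell)$, so that Propositions~5.2, 5.3 and Corollary~5.4 of \cite{langIsotropicGaussianRandom2015} transfer verbatim with an extra $T^H$.

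First I would compute the tail explicitly. By orthonormality of the spherical harmonics and independence of the fBms,
\begin{equation*}
  \E\!\left[\|B_Q^H(t) - B_Q^{H,\kappa}(t)\|_{L^2(\IS^2)}^{2}\right]
  = \sum_{\ell=\kappa+1}^{\infty}\sum_{m=-\ell}^{\ell} A_\ell \,\E[(\beta_{\ell,m}^H(t))^2]
  = t^{2H}\sum_{\ell=\kappa+1}^{\infty}(2\ell+1)A_\ell.
\end{equation*}
The algebraic decay $A_\ell \le C\ell^{-\alpha}$ with $\alpha>2$ gives, by an integral comparison for $\kappa \ge \ell_0$, that $\sum_{\ell>\kappa}(2\ell+1)\ell^{-\alpha} \le \tilde C\,\kappa^{-(\alpha-2)}$, hence the $L^2(\Omega;L^2(\IS^2))$ error is bounded by $\tilde C^{1/2} T^H \kappa^{-(\alpha-2)/2}$ uniformly in $t\in\IT$.

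Next I would upgrade from $L^2$ to $L^p$. For each fixed $t$ the random element $B_Q^H(t) - B_Q^{H,\kappa}(t)$ is Gaussian in $L^2(\IS^2)$, so by Fernique's theorem (or equivalently Kahane's inequality for Gaussian series in a Hilbert space) all moments of its norm are equivalent,
\begin{equation*}
  \bigl\|\,\|B_Q^H(t)-B_Q^{H,\kappa}(t)\|_{L^2(\IS^2)}\bigr\|_{L^p(\Omega)}
  \le c_p \,\bigl\|\,\|B_Q^H(t)-B_Q^{H,\kappa}(t)\|_{L^2(\IS^2)}\bigr\|_{L^2(\Omega)},
\end{equation*}
with $c_p$ depending only on $p$. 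Combining with the previous step and absorbing the constants into $\hat C_p$ yields the claimed bound $\hat C_p T^H \kappa^{-(\alpha-2)/2}$ uniformly in $t\in\IT$.

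Finally, for the almost sure statement I would use a Borel--Cantelli argument. Fix $\beta<(\alpha-2)/2$ and pick $p$ so large that $p\bigl((\alpha-2)/2-\beta\bigr)>1$. Markov's inequality and the $L^p$ bound give
\begin{equation*}
  \IP\!\left(\|B_Q^H(t)-B_Q^{H,\kappa}(t)\|_{L^2(\IS^2)} > \kappa^{-\beta}\right)
  \le \kappa^{p\beta}\bigl(\hat C_p T^H\bigr)^p \kappa^{-p(\alpha-2)/2},
\end{equation*}
which is summable in $\kappa$ by the choice of $p$. Borel--Cantelli then produces a $\IP$-almost surely finite random index $\kappa_0(t)$ beyond which the stated pathwise bound holds. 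The only genuine obstacle is the first step's integral comparison for the angular tail, but this is entirely routine once $\alpha>2$; everything else is a direct transfer of the static arguments of \cite{langIsotropicGaussianRandom2015} after pulling the factor $t^{2H}\le T^{2H}$ out of the variance.
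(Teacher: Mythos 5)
Your proposal is correct and follows essentially the same route as the paper, which simply invokes Propositions~5.2 and~5.3 and Corollary~5.4 of \cite{langIsotropicGaussianRandom2015} for the static field and notes that $\E[(\beta_{\ell,m}^H(t))^2]=t^{2H}\le T^{2H}$ pulls out as a constant. You merely unfold the internals of those cited results (tail computation, Gaussian moment equivalence for the $L^p$ bound, Markov plus Borel--Cantelli for the almost sure statement), and each step is carried out correctly.
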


A similar result is obtained on~$\IS^{d-1}$ when truncating the expansion~\eqref{eqn:KL_Sd} instead. The rate of convergence in $L^p(\gO;L^2(\IS^d))$ becomes $\gk^{(\alpha-d+1)/2}$ and therefore, $\gb < (\alpha-d+1)/2$ in $\IP$-a.s.\ sense. This is proven in the same way as Proposition~5.2 and Theorem~5.3 in~\cite{langIsotropicGaussianRandom2015}, where we use additionally that $h(\ell,d) \le C \ell^{d-2}$ by Stirling's inequality.

\subsection{Simulation of real-valued fractional Brownian motion}

Computing the above spectral approximation requires the simulation of independent sample paths of fBm.
Since fBm does not have independent increments like Brownian motion does, different simulation methods are required.
This is a widely explored topic, cf., e.g., \cite{diekerSimulationFractionalBrownian2004, wongSimulationFractionalBrownian2024b} and references therein.
A widely used method is the circulant embedding method (cf., e.g., \cite{perrinFastExactSynthesis2002} and references therein). If we accept an approximating algorithm, the conditionalized random midpoint displacement method (\cite{norrosSimulationFractionalBrownian1999}) can simulate a sample path of length~$N$ in $\cO(N)$ time.
We describe both methods here and do a performance comparison.
Both algorithms simulate the correlated increments of fBm which need to be added up to obtain sample paths.

\subsubsection{Circulant embedding}\label{subsec:CE}

The principle of the circulant embedding (CE) method is the same as that of the well-known Cholesky method: multiplying a standard Gaussian vector by the square root of the desired covariance matrix.
However, it makes use of the structure of the covariance matrix by employing the FFT to multiply a vector by the matrix square root.

To explain the method in the context of fBm, let $\IT_N$ be an equidistant time grid with $0= t_0 < \cdots < t_N = T$ and step size~$h$ and denote by $\Delta \gb^H(t_j) = \gb^H(t_{j+1}) - \gb^H(t_j)$ the correlated but stationary increments of~$\gb^H$. Setting $\gamma(|j-k|h) = \gamma(|t_j-t_k|) = \E[\Delta \gb^H(t_j)\Delta \gb^H(t_k)]$, the covariance matrix $\Sigma = (\gamma(|t_j-t_k|))_{j,k=0}^{N-1}$ (marked in grey below) is a Toeplitz matrix which can be embedded into the \emph{circulant matrix}
\newcommand{\cbg}{\cellcolor{gray!10}}
\begin{equation*}
	C = \tiny \begin{pmatrix}
		& \cbg \gamma(0) & \cbg \gamma(h) & \cbg \dots & \cbg \gamma((N-1)h) & \gamma((N-2)h) & \gamma((N-3)h) & \dots & \gamma(h)\\
		& \cbg\gamma(h) & \cbg \gamma(0) & \cbg \dots & \cbg \gamma((N-2)h) & \gamma((N-1)h) & \gamma((N-2)h) & \dots & \gamma(2h)\\
		& \cbg \vdots & \cbg \vdots & \cbg \ddots & \cbg \vdots& \vdots & \vdots & \ddots & \vdots\\
		& \cbg\gamma((N-1)h) & \cbg \gamma((N-2)h) & \cbg \dots & \cbg \gamma(0) & \gamma(h) & \gamma(2h) & \dots & \gamma((N-2)h)\\
		& \gamma((N-2)h) &  \gamma((N-1)h) &  \dots & \gamma(h) & \gamma(0) & \gamma(h) & \dots & \gamma((N-3)h)\\
		& \gamma((N-3)h) & \gamma((N-2)h) & \dots & \gamma(2h) & \gamma(h) & \gamma(0) &\dots &\gamma((N-4)h)\\
		& \vdots & \vdots & \ddots & \vdots & \vdots & \vdots & \ddots & \vdots \\
		& \gamma(h) & \gamma(2h) & \dots & \gamma((N-2)h) & \gamma((N-3)h) & \gamma((N-4)h) &\dots & \gamma(0)
	\end{pmatrix}
\end{equation*}
that is again a covariance matrix, as shown in \cite{perrinFastExactSynthesis2002}.
The description of the algorithm below follows~\cite{diekerSimulationFractionalBrownian2004} and~\cite{perrinFastExactSynthesis2002}.

We observe that $C = U \Lambda U^\ast$ has an eigendecomposition, where $U$ consists of the eigenvectors which are the Fourier modes and $\gL$ is a diagonal matrix with eigenvalues $(\gl_k, k=0,\ldots,2N-3)$ computed exactly by a DFT of the first row of~$C$.
These eigenvalues only need to be precomputed once for the generation of an arbitrary number of sample paths.
The square root of~$C$ is then given by $C^{1/2} = U \Lambda^{1/2}U^\ast$ and $\cN(0,C)$-distributed random samples can be generated by computing $C^{1/2} W$ for $W\sim \cN(0,\Id_{2N-2})$ via two DFT and a matrix multiplication, which can be performed in $\cO((2N-2)\log(2N-2))$ time via FFT. To omit one FFT, one can compute the distribution of $U^\ast W$ as exploited, e.g., in \cite{diekerSimulationFractionalBrownian2004,LP11}. Alternatively, one chooses, as in \cite{perrinFastExactSynthesis2002}, $W = V_1 + i V_2$ for independent $V_1, V_2 \sim \cN(0,\Id_{2N-2})$ and computes $Z = U\Lambda^{1/2}W$ by a single FFT of the vector 
$(\sqrt{\lambda_k/(2N-2)} w_k, k=1, \dots, 2N-2)$.
Then, $Z$ contains independent $\cN(0,C)$-distributed random samples in the real and imaginary part. The first $N$ entries of the random vectors $\Re(Z)$ and $\Im(Z)$ are the increments of fBm sample paths.

\subsubsection{CRMD}
\label{subsubsec:CRMD}

\definecolor{mplorange}{HTML}{FF7F0E}
\definecolor{mplblue}{HTML}{1F77B4}
\definecolor{mplgreen}{HTML}{2CA02C}
\newcommand\newinc{black}
\newcommand\known{black}
\newcommand\depend{mplorange}
\newcommand\used{mplblue}
\begin{figure}
	\centering
  \begin{tikzpicture}[scale=0.35]
	\node at (15, 0) (node1) {{$X_{0,1}$}};
	\draw[{|[right]}-{|[left]}, \newinc, line width=1.5pt] (0,-1) -- (30,-1);
	\node at (7.5, -2) (node1) {{$X_{1,1}$}};
	\draw[{|[right]}-{|[left]}, \newinc, line width=1.5pt] (0,-3) -- (15,-3);
	\node at (22.5, -2) (node1) {{$X_{1,2}$}};
	\draw[{|[right]}-{|[left]}, \known, loosely dashed, line width=1.5pt] (15,-3) -- (30,-3);
	\node at (7.5/2, -4) (node1) {{$X_{2,1}$}};
	\draw[{|[right]}-{|[left]}, \newinc, line width=1.5pt] (0,-5) -- (7.5,-5);
	\node at (7.5+7.5/2, -4) (node1) {{$X_{2,2}$}};
	\draw[{|[right]}-{|[left]}, \known, loosely dashed, line width=1.5pt] (7.5,-5) -- (15,-5);
	\node at (15+7.5/2, -4) (node1) {{$X_{2,3}$}};
	\draw[{|[right]}-{|[left]}, \used, line width=1.5pt] (15,-5) -- (15+7.5,-5);
	\node at (15+7.5+7.5/2, -4) (node1) {{$X_{2,4}$}};
	\draw[{|[right]}-{|[left]}, \depend, loosely dashed, line width=1.5pt] (15+7.5,-5) -- (30,-5);
  
	\node at (7.5/4, -6) (node1) {{$X_{3,1}$}};
	\draw[{|[right]}-{|[left]}, \depend, line width=1.5pt] (0,-7) -- (7.5/2,-7);
	\node at (7.5/2+7.5/4, -6) (node1) {{$X_{3,2}$}};
	\draw[{|[right]}-{|[left]}, \depend, loosely dashed, line width=1.5pt] (7.5/2,-7) -- (15/2,-7);
	\node at (15/2+7.5/4, -6) (node1) {{$X_{3,3}$}};
	\draw[{|[right]}-{|[left]}, \used, line width=1.5pt] (15/2,-7) -- (15/2+7.5/2,-7);
	\node at (15/2+7.5/2+7.5/4, -6) (node1) {{$X_{3,4}$}};
	\draw[{|[right]}-{|[left]}, \used, loosely dashed, line width=1.5pt] (15/2+7.5/2,-7) -- (15,-7);

	\node at (18.75, -5) (x23) {};
	\node at (16.875, -7) (x35) {};
	\draw [-latex, bend left, line width=1.5pt] (x23) edge (x35);
  
	\node at (15/2+7.5/4, -7) (x33) {};
	\node at (15/2+7.5/2+7.5/4, -7) (x34) {};

	\draw [-latex, bend right, line width=1.5pt] (x34) edge (x35);

	\draw [-latex, bend right, line width=1.5pt] (x33) edge (x35);

	\node at (15+7.5/4, -6) (node1) {{$X_{3,5}$}};
	\draw[{|[right]}-{|[left]}, mplgreen, line width=2pt] (15,-7) -- (15+7.5/2,-7);
	\node at (15+7.5/2+7.5/4, -6) (node1) {{$X_{3,6}$}};
	\draw[{|[right]}-{|[left]}, \known, loosely dashed, line width=1.5pt] (15+7.5/2,-7) -- (15+15/2,-7);
	\node at (15+15/2+7.5/4, -6) (node1) {{$X_{3,7}$}};
	\draw[{|[right]}-{|[left]}, \newinc, line width=1.5pt] (15+15/2,-7) -- (15+15/2+7.5/2,-7);
	\node at (15+15/2+7.5/2+7.5/4, -6) (node1) {{$X_{3,8}$}};
	\draw[{|[right]}-{|[left]}, \known, loosely dashed, line width=1.5pt] (15+15/2+7.5/2,-7) -- (15+15,-7);
  \end{tikzpicture}
  \caption{Visual representation of the CRMD method ($\mu=2, \nu=1$). \textcolor{mplgreen}{$X_{3,5}$ is simulated} \textcolor{mplblue}{conditional on $X_{2,3}$, $X_{3,3}$, and $X_{3,4}$,} \textcolor{mplorange}{ignoring its dependence on $X_{2,4}$, $X_{3,1}$, and $X_{3,2}$.}}
  \label{fig:CRMD}
\end{figure}
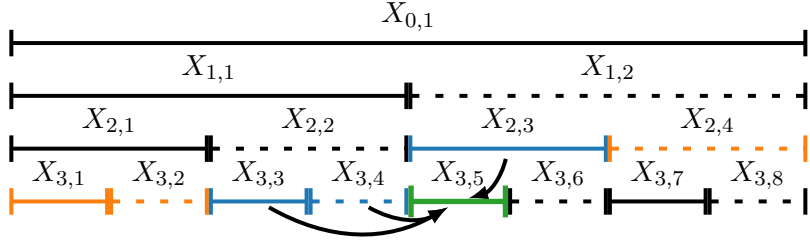

The conditionalized random midpoint displacement (CRMD) method is based on L\'evy's construction of Brownian motion using the Brownian bridge, and we introduce it following \cite{norrosSimulationFractionalBrownian1999}.

Similarly to Subsection~(\ref{subsec:CE}), we want to simulate a sample path of~$\beta^H$ on the equidistant time grid~$\IT_N$ with $T=1$, $N = 2^{n_0}$, and $n_0 \in \N$.
The initialization step is to simulate the increment $X_{0,1} = \beta^H(1) - \beta^H(0) \sim \cN(0, 1)$.
The discretization grid is then dyadically refined such that in the $n$-th step for $n \leq n_0$, we want to generate the increments $(X_{n, k} = \beta^H(k 2^{-n}) - \beta^H((k-1)2^{-n}), k=1,\ldots,2^n)$ given by the recursive relationship
$$ X_{n-1, k} = X_{n, 2k-1} + X_{n, 2k}, \qquad k=1, \dots, 2^{n-1}. $$
Assume now that we have simulated refinement step $n-1$ and $X_{n, 1}, \dots, X_{n, 2k-2}$.
In contrast to the Markovian structure of Brownian motion, where the next increment to be simulated, $X_{n, 2k-1}$, only depends on~$X_{n-1, k}$, the long-range dependence of fBm requires to condition on all previously known increments $M_{n,2k-1} = (X_{n-1, k}, \dots, X_{n-1, 2^{n-1}}, X_{n, 1}, \dots, X_{n, 2k-2})$. As computed in detail in~\cite{norrosSimulationFractionalBrownian1999}, $X_{n, 2k-1}$ is a Gaussian random variable with conditional mean $\E[X_{n, 2k-1}|M_{n,2k-1}]$
and variance $\Var[X_{n, 2k-1}|M_{n,2k-1}]$ that can be computed via the covariance matrix of the extended vector $(X_{n, 2k-1}, M_{n,2k-1})$. Once $X_{n, 2k-1}$ has been sampled, we compute $X_{n, 2k} = X_{n-1, k} - X_{n, 2k-1}$.
This procedure is exact but of computational complexity~$\cO(N^2)$.

To decrease the complexity to $\cO(N)$, we need to compute the conditional distributions more efficiently while accepting that the resulting samples are approximate sample paths. For that, let us reduce the maximal number of increments to condition on from the right and left to $\nu,\mu \in \N$ and set
$M^{\nu, \mu}_{n, 2k-1} = (X_{n-1, k}, \dots, X_{n-1, \min(k+\nu, 2^{n-1})}, X_{n, \max(2k-1-\mu, 1)}, \dots, X_{n, 2k-2})$.
We sample now $X_{n, 2k-1}$ based on the approximate conditional mean 
$\E[X_{n, 2k-1}|M^{\nu, \mu}_{n,2k-1}]$ and variance $\Var[X_{n, 2k-1}|M^{\nu, \mu}_{n,2k-1}]$. This is illustrated in Figure~\ref{fig:CRMD}.
Note that $\E[X_{n, 2k-1}|M^{\nu, \mu}_{n,2k-1}]$ can be computed as a dot product of a fixed vector $\mathbf{e}$ with $M^{\nu, \mu}_{n,2k-1}$ and $\Var[X_{n, 2k-1}|M^{\nu, \mu}_{n,2k-1}] = v$ is a scalar value that only needs to be rescaled.
Only for points near the boundary, up to $\nu \cdot \mu$ different $\mathbf{e}$ and $v$ are required, which can also be precomputed.
The vectors $\mathbf{e}$ are of size at most $\mu + \nu$, and therefore the computational complexity is reduced to $\cO((\mu+\nu)N)$, i.e., linear in~$N$. This makes CRMD asymptotically faster than CE.

Theoretical error bounds in $\nu$ and $\mu$ are to the best of our knowledge unknown. Dieker \cite{diekerSimulationFractionalBrownian2004} compares the covariance functions for different $\mu$ and~$\nu$ and performs statistical tests on the obtained sample distributions. To estimate the strong error numerically, we set $\nu = \lceil \mu/2 \rceil$ due to the symmetry in the algorithm, and simulate sample paths with different values of $\mu$ but using the same random numbers as input. 
Denoting by $\beta^{H, \mu}$ the approximation of fBm by CRMD, Figure~\ref{fig:conv_in_m} shows how the error $\sup_{t\in\IT_N} \lVert \beta^{H, \mu}(t) - \beta^H(t) \rVert_{L^2(\Omega)}$ decays as $\mu$ increases, based on $M=10^4$ Monte Carlo samples for varying $H$ with $N=512$.
The exact (reference) solution~$\beta^H$ is computed with $\mu = N$.
Table~\ref{tab:rates} shows in more detail the empirically obtained decay rates of this error for a larger number of values of $H$.
Note that we decided to estimate the rates in the range $\mu = s, \dots, 128, s = 10, 20, 50,$ in order to exclude the less regular behavior for $\mu < s$.
\begin{table}
\resizebox{\textwidth}{!}{
\begin{tabular}{lccccccccccccccc}
	H & 0.01 & 0.05 & 0.1 & 0.2 & 0.3 & 0.4 & 0.45 & 0.49 & 0.51 & 0.55 & 0.6 & 0.7 & 0.8 & 0.9 \\
	$r_H$, $s=10$ & 0.88 & 0.85 & 0.80 & 0.81 & 0.89 & 1.03 & 1.17 & 1.59 & 1.95 & 0.87 & 0.96 & 1.14 & 1.26 & 1.37 \\
	$r_H$, $s=20$ & 0.92 & 0.86 & 0.81 & 0.81 & 0.88 & 1.00 & 1.12 & 1.53 & 1.88 & 0.84 & 1.01 & 1.16 & 1.28 & 1.39 \\
	$r_H$, $s=50$ & 0.96 & 0.88 & 0.83 & 0.83 & 0.85 & 0.95 & 1.06 & 1.42 & 1.54 & 0.92 & 1.05 & 1.19 & 1.30 & 1.40 
\end{tabular}
}
\caption{\label{tab:rates}Empirical decay rates $r_H$ of the error $\sup_{t\in\IT_N} \lVert \beta^{H, \mu}(t) - \beta^H(t) \rVert_{L^2(\Omega)}$.}
\end{table}
Given this data, we estimate for $0 \ll \mu \ll N$ that the error decays with rates $r_H$ around $1$ that depend on $H$ and are bounded from below by $0.8$.
Therefore, we obtain the empirical error bounds
\begin{equation}\label{eq:conv_fBm_CRMD}
	\sup_{t \in \IT_N} \, \lVert \beta^{H, \mu}(t) - \beta^H(t) \rVert_{L^2(\Omega)} \le C \mu^{-r_H}.
\end{equation}
The constant $C$~depends on~$H$ but appears to be independent of~$N$, provided $N \gg \mu$.

The fully discrete approximation of the $Q$-fBm~\eqref{eq:KL_QfBm_x} on the sphere based on the spectral approximation~\eqref{eq:QfBm_spectral_approx} and CRMD is then given by
\begin{equation*}
	B_{Q}^{H, \kappa, \mu}(t,x) 
	= \sum_{\ell=0}^{\kappa}\sum_{m=-\ell}^{\ell} \sqrt{A_\ell} \, \beta_{\ell, m}^{H,\mu}(t) \, Y_{\ell , m}(x).
\end{equation*}
Its strong error can be split into
	\begin{align*}
	&\lVert B_{Q}^{H}(t) - B_{Q}^{H, \kappa, \mu}(t) \rVert_{L^2(\Omega; L^2({\IS^{2}}))}\\
	& \qquad  \leq \lVert B_{Q}^{H}(t) - B_{Q}^{H, \kappa}(t) \rVert_{L^2(\Omega; L^2({\IS^{2}}))} + \lVert B_{Q}^{H, \kappa}(t) - B_{Q}^{H, \kappa, \mu}(t) \rVert_{L^2(\Omega; L^2({\IS^{2}}))},
\end{align*}
where the first term is bounded by Theorem~\ref{thm:conv_trunc_KLexp_GRF}. The second term satisfies based on~\eqref{eq:conv_fBm_CRMD} 
\begin{equation*}
	\lVert B_{Q}^{H, \kappa}(t) - B_{Q}^{H, \kappa, \mu}(t) \rVert_{L^2(\Omega; L^2({\IS^{2}}))}
		\le C \sqrt{\trace Q} \, \mu^{-r_H}.
\end{equation*}
This allows us to conclude the analysis of CRMD with the following corollary.

\begin{corollary}
	Under the assumptions of Theorem~\ref{thm:conv_trunc_KLexp_GRF} and~\eqref{eq:conv_fBm_CRMD} with $\mu \ll N$, the strong error of the fully discrete spectral and CRMD approximation of~\eqref{eq:KL_QfBm_x} is bounded by
	\begin{equation*}
		\sup_{t \in \IT_N} \lVert B_{Q}^{H}(t) - B_{Q}^{H, \kappa, \mu}(t) \rVert_{L^2(\Omega; L^2({\IS^{2}}))} 
			\le C (\kappa^{-(\alpha -2 )/2} + \sqrt{\trace Q} \, \mu^{-r_H}).
x	\end{equation*}
\end{corollary}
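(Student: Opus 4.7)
The plan is to establish the bound by starting from the error splitting that is already written out immediately before the corollary and then bounding each of the two summands separately, with the key observation being that the CRMD error decomposes nicely across spherical harmonic modes thanks to orthonormality and independence.

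First I would treat the spatial truncation error. Since Theorem~\ref{thm:conv_trunc_KLexp_GRF} gives, uniformly in $t \in \IT$, the bound
\[
  \sup_{t \in \IT}\|B_{Q}^{H}(t) - B_{Q}^{H, \kappa}(t)\|_{L^2(\gO;L^2(\IS^2))}
  \le \hat{C}_2 \, T^{H}\,  \gk^{-(\ga-2)/2},
\]
we can absorb the constants $\hat C_2 T^H$ into $C$ to get the first term $\kappa^{-(\alpha-2)/2}$ of the claimed estimate. This requires no work beyond invoking the theorem.

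Second I would handle the CRMD error $\lVert B_{Q}^{H,\kappa}(t) - B_{Q}^{H,\kappa,\mu}(t)\rVert_{L^2(\gO;L^2(\IS^2))}$. Using the orthonormality of $(Y_{\ell,m})$ in $L^2(\IS^2)$ and the fact that the residuals $\beta_{\ell,m}^H(t) - \beta_{\ell,m}^{H,\mu}(t)$ stem from independent fBm samples driven by independent input noise, an application of Parseval's identity together with Fubini yields
\[
  \lVert B_{Q}^{H,\kappa}(t) - B_{Q}^{H,\kappa,\mu}(t) \rVert_{L^2(\gO;L^2(\IS^2))}^{2}
  = \sum_{\ell=0}^{\kappa} \sum_{m=-\ell}^{\ell} A_\ell\, \E\bigl[\bigl(\beta_{\ell,m}^H(t) - \beta_{\ell,m}^{H,\mu}(t)\bigr)^2\bigr].
\]
Inserting the empirical per-path bound \eqref{eq:conv_fBm_CRMD}, which gives $\E[(\beta_{\ell,m}^H(t) - \beta_{\ell,m}^{H,\mu}(t))^2] \le C^2\mu^{-2r_H}$ uniformly in $t \in \IT_N$ and in $(\ell,m)$ since the CRMD algorithm is identical for each mode, the right-hand side is bounded by
\[
  C^2 \mu^{-2 r_H} \sum_{\ell=0}^{\kappa} (2\ell+1) A_\ell
  \le C^2 \mu^{-2 r_H} \trace Q,
\]
using $\trace Q = \sum_{\ell=0}^\infty (2\ell+1)A_\ell$ from Section~\ref{sec:QfBm}. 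Taking square roots produces the announced $\sqrt{\trace Q}\, \mu^{-r_H}$ factor, and the triangle inequality together with the sup over $t \in \IT_N$ finishes the proof.

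I do not anticipate any genuine obstacle here: the result is essentially a bookkeeping combination of Theorem~\ref{thm:conv_trunc_KLexp_GRF} with the empirical bound \eqref{eq:conv_fBm_CRMD}. The only point that deserves care is justifying that \eqref{eq:conv_fBm_CRMD} applies uniformly over all $(\ell,m)$ and over $t \in \IT_N$; this holds because CRMD runs the same recipe (with the same grid $\IT_N$ and same parameters $\mu,\nu$) independently for each mode, so the per-mode $L^2(\gO)$-error estimate is the scalar fBm bound and the implicit constant $C$ in \eqref{eq:conv_fBm_CRMD} may be taken independent of the mode index.
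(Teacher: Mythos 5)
Your proposal is correct and follows exactly the route of the paper: the triangle-inequality split into the spectral truncation error (bounded by Theorem~\ref{thm:conv_trunc_KLexp_GRF}) and the CRMD error, with the latter controlled via orthonormality of the spherical harmonics and independence of the modes, yielding the factor $\sqrt{\trace Q}\,\mu^{-r_H}$ from~\eqref{eq:conv_fBm_CRMD}. The only difference is that you spell out the Parseval/trace computation that the paper states without detail, which is a welcome but not substantively different addition.
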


\begin{figure}[tb]
	\includegraphics[width=\textwidth]{./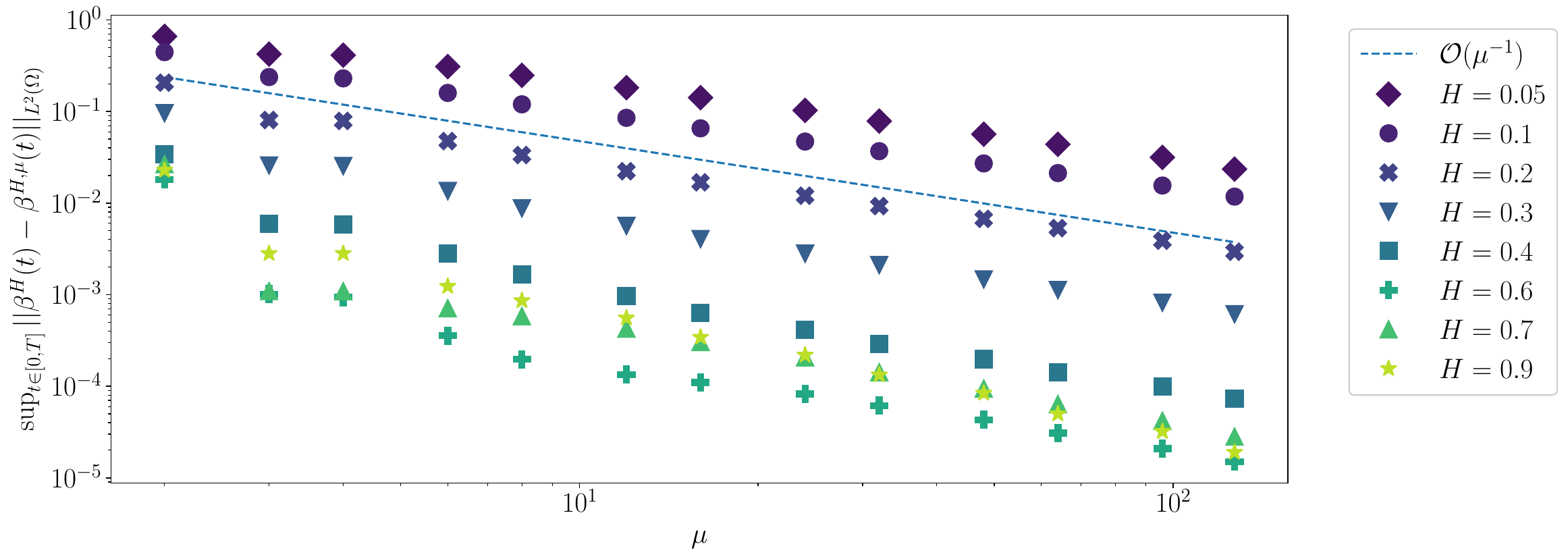}
		\caption{\label{fig:conv_in_m}Empirical decay of the error of CRMD.}
\end{figure}
\begin{figure}[tb]
	\includegraphics[width=\textwidth]{./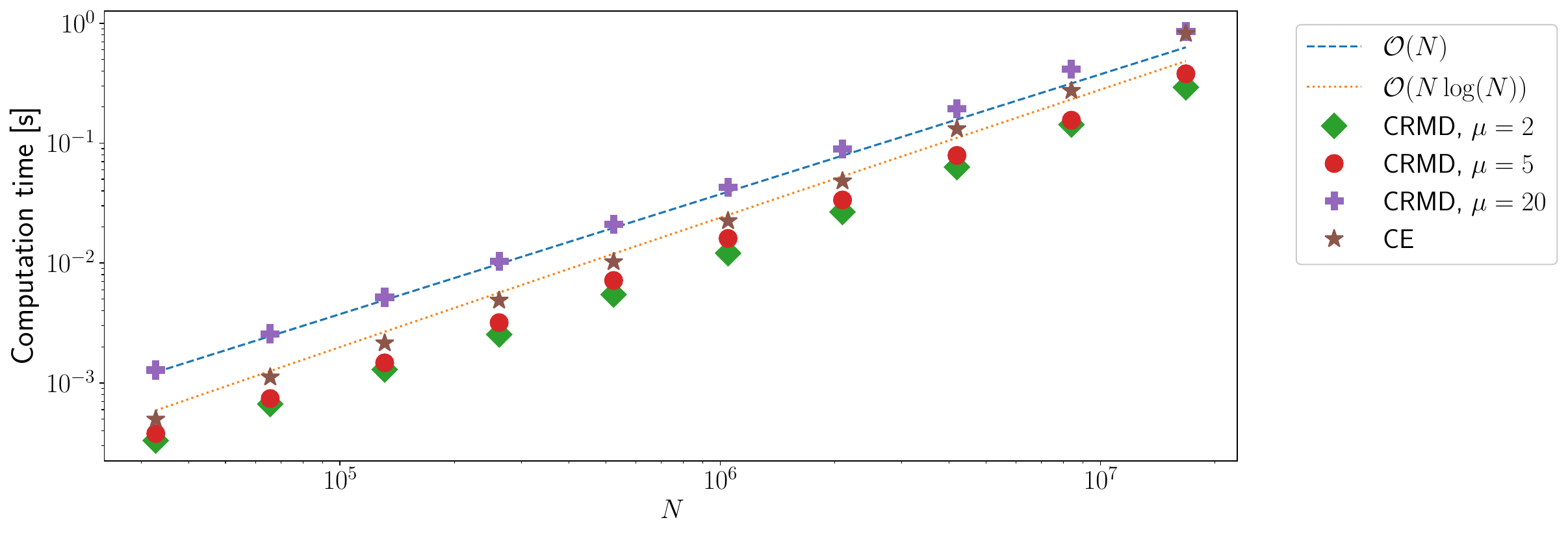}
		\caption{\label{fig:perf}Performance comparison of CE and CRMD.}
\end{figure}

\subsubsection{Comparison of computational performance}

We have seen that the computational costs of CE behave asymptotically as $\cO(N \log N)$ while CRMD performs with linear complexity. In this section, we compare their performance for relevant choices of~$N$ based on our implementation in Julia. We check when the constants hidden by the $\cO$-notation matter compared to the extra $\log N$ factor in the asymptotics.

Figure~\ref{fig:perf} shows the computation time required by both methods for the simulation of sample paths of varying length~$N$, ranging from $2^{15}$ to $2^{24} \approx 1.68 \cdot 10^7$, with $H=0.8$.
Note that both methods perform optimally when $N$ is a power of~2 and allow for the precomputation of certain steps that depend only on~$N$ (and $\mu$ for CRMD) and do not need to be repeated for every new sample path. The time taken by this was excluded in our analysis here.
The computation was performed single-threaded on an Intel\textsuperscript{\textregistered} Core\texttrademark \ i5-1245U system with 16 GB of RAM.

A slowdown of CRMD for higher values of $\mu$ is observed, since the computational cost per increment increases due to the required evaluation of wider dot products.
For small values of $\mu$, the CRMD method is faster than the CE method, while for larger $\mu$, CRMD becomes slower than CE.

We remark that CRMD can be implemented to only require memory for the $N$ floating point values representing the resulting sample path and $\cO(\mu^3)$ values to store the vectors ${\mathbf{e}}$ and scalars~$v$ that are used to compute the conditional mean and variance.
CE, on the other hand, requires about $6N$ floating point values to be stored since the $2N$ (real-valued) eigenvalues of the covariance matrix $C$ need to be stored and the output is a complex vector of length $2N$.

Considering the results in Figure~\ref{fig:perf}, the choice of method is a trade-off between accuracy and computational performance.
If low accuracy is sufficient, better performance can be obtained by using CRMD with small~$\mu$.
On the other hand, if higher accuracy is required, CE is the method of choice.
Our tests show that, on our system, it is not advisable to use CRMD with $\mu = 20$ since we can obtain distributionally exact samples with the same computational costs using the CE method.
Dieker \cite{diekerSimulationFractionalBrownian2004} performed a similar comparison of computational cost, albeit including the precomputation steps, and reported similar relative costs. However, we note a significant speedup of all computations.

Finally, we note that CE can be expressed quite simply in terms of Fourier transforms, for which highly optimized library implementations of FFT are available.
CRMD, on the other hand, is a significantly more complex algorithm. 
Hence, from an implementation and usability perspective, CE is preferred.

\bibliographystyle{habbrv}
\bibliography{fBM_spheres_full}

\begin{thebibliography}{10}
\expandafter\ifx\csname url\endcsname\relax
  \def\url#1{\texttt{#1}}\fi
\expandafter\ifx\csname doi\endcsname\relax
  \def\doi#1{\burlalt{doi:#1}{http://dx.doi.org/#1}}\fi
\expandafter\ifx\csname urlprefix\endcsname\relax\def\urlprefix{URL }\fi
\expandafter\ifx\csname href\endcsname\relax
  \def\href#1#2{#2}\fi
\expandafter\ifx\csname burlalt\endcsname\relax
  \def\burlalt#1#2{\href{#2}{#1}}\fi

\bibitem{issoglioCylindricalFractionalBrownian2014}
E.~Issoglio and M.~Riedle.
\newblock Cylindrical fractional {B}rownian motion in {B}anach spaces.
\newblock {\em Stochastic Processes and their Applications},
  124(11):3507--3534, 2014.
\newblock \doi{10.1016/j.spa.2014.05.010}.

\bibitem{ABOW18}
V.~V. Anh, P.~Broadbridge, A.~Olenko, and Y.~G. Wang.
\newblock On approximation for fractional stochastic partial differential
  equations on the sphere.
\newblock {\em Stochastic environmental research and risk assessment},
  32:2585--2603, 2018.
\newblock \doi{10.1007/s00477-018-1517-1}.

\bibitem{LX19}
X.~Lan and Y.~Xiao.
\newblock Regularity properties of the solution to a stochastic heat equation
  driven by a fractional {G}aussian noise on $\mathbb{S}^2$.
\newblock {\em Journal of Mathematical Analysis and Applications},
  476(1):27--52, Aug. 2019.
\newblock \doi{10.1016/j.jmaa.2019.01.077}.

\bibitem{porcu30YearsSpace2021}
E.~Porcu, R.~Furrer, and D.~Nychka.
\newblock 30 years of space--time covariance functions.
\newblock {\em WIREs Computational Statistics}, 13(2):e1512, 2021.
\newblock \doi{10.1002/wics.1512}.

\bibitem{greckschQfractionalBrownianMotion2009}
W.~Grecksch, C.~Roth, and V.~V. Anh.
\newblock Q-fractional {B}rownian motion in infinite dimensions with
  application to fractional {B}lack--{S}choles market.
\newblock {\em Stochastic Analysis and Applications}, 27(1):149--175, 2009.
\newblock \doi{10.1080/07362990802565084}.

\bibitem{duncanFractionalBrownianMotion2002}
T.~E. Duncan, B.~{Pasik-Duncan}, and B.~Maslowski.
\newblock Fractional {B}rownian motion and stochastic equations in {H}ilbert
  spaces.
\newblock {\em Stochastics and Dynamics}, 02(02):225--250, 2002.
\newblock \doi{10.1142/S0219493702000340}.

\bibitem{MP11}
D.~Marinucci and G.~Peccati.
\newblock {\em Random Fields on the Sphere. {{Representation}}, Limit Theorems
  and Cosmological Applications}.
\newblock Cambridge University Press, Cambridge, 2011.
\newblock \doi{10.1017/CBO9780511751677}.

\bibitem{langIsotropicGaussianRandom2015}
A.~Lang and C.~Schwab.
\newblock Isotropic {G}aussian random fields on the sphere: Regularity, fast
  simulation and stochastic partial differential equations.
\newblock {\em The Annals of Applied Probability}, 25(6):3047--3094, 2015.
\newblock \doi{10.1214/14-AAP1067}.

\bibitem{perrinFastExactSynthesis2002}
E.~Perrin, R.~Harba, R.~Jennane, and I.~Iribarren.
\newblock Fast and exact synthesis for 1-{D} fractional {B}rownian motion and
  fractional {G}aussian noises.
\newblock {\em IEEE Signal Processing Letters}, 9(11):382--384, 2002.
\newblock \doi{10.1109/LSP.2002.805311}.

\bibitem{norrosSimulationFractionalBrownian1999}
I.~Norros, P.~Mannersalo, and J.~Wang.
\newblock Simulation of fractional {B}rownian motion with conditionalized
  random midpoint displacement.
\newblock {\em Advances in Performance Analysis}, 2(1):77--101, 1999.

\bibitem{LM24_code}
A.~Lang and B.~Müller.
\newblock Code to ``{I}sotropic {Q}-fractional {B}rownian motion on the sphere:
  regularity and fast simulation'', 2024.
\newblock \doi{10.5281/zenodo.14529834}.

\bibitem{greckschParabolicStochasticDifferential1999}
W.~Grecksch and V.~V. Anh.
\newblock A parabolic stochastic differential equation with fractional
  {B}rownian motion input.
\newblock {\em Statistics \& Probability Letters}, 41(4):337--346, 1999.
\newblock \doi{10.1016/S0167-7152(98)00147-3}.

\bibitem{dapratoStochasticEquationsInfinite2014}
G.~Da~Prato and J.~Zabczyk.
\newblock {\em Stochastic Equations in Infinite Dimensions}.
\newblock Number 152 in Encyclopedia of Mathematics and Its Applications.
  Cambridge University Press, Cambridge, 2 edition, 2014.
\newblock \doi{10.1017/CBO9781107295513}.

\bibitem{kallenbergFoundationsModernProbability2021}
O.~Kallenberg.
\newblock {\em Foundations of {{Modern Probability}}}, volume~99 of {\em
  Probability {{Theory}} and {{Stochastic Modelling}}}.
\newblock Springer International Publishing, Cham, 2021.
\newblock \doi{10.1007/978-3-030-61871-1}.

\bibitem{kratschmerKolmogorovChentsovType2023}
V.~Kr{\"a}tschmer and M.~Urusov.
\newblock A {K}olmogorov--{C}hentsov type theorem on general metric spaces with
  applications to limit theorems for {B}anach-valued processes.
\newblock {\em Journal of Theoretical Probability}, 36(3):1454--1486, 2023.
\newblock \doi{10.1007/s10959-022-01207-8}.

\bibitem{Y83}
M.~I. Yadrenko.
\newblock {\em Spectral Theory of Random Fields}.
\newblock Translation Series in Mathematics and Engineering. Optimization
  Software, Inc., Publications Division; Springer-Verlag, New York - Heidelberg
  - Berlin, 1983.

\bibitem{diekerSimulationFractionalBrownian2004}
T.~Dieker.
\newblock Simulation of fractional {{Brownian}} motion.
\newblock Master's thesis, Vrije Universiteit Amsterdam, Amsterdam, 2004.

\bibitem{wongSimulationFractionalBrownian2024b}
Y.~C.~C. Wong and P.~Bilokon.
\newblock Simulation of fractional {B}rownian motion and related stochastic
  processes in practice: A straightforward approach.
\newblock SSRN, 2024.
\newblock \doi{10.2139/ssrn.4706152}.

\bibitem{LP11}
A.~Lang and J.~Potthoff.
\newblock Fast simulation of {{Gaussian}} random fields.
\newblock {\em Monte Carlo Methods and Applications}, 17(3):195--214, 2011.
\newblock \doi{10.1515/MCMA.2011.009}.

\end{thebibliography}

\appendix

\section{Proof of Theorem~\ref{thm:KC_kratschmer_Gaussian}}\label{app:proofs_KC}

This theorem arises from \cite[Theorem~1.1]{kratschmerKolmogorovChentsovType2023} by the standard argument on finiteness of all moments of a Gaussian distribution.
By assumption, we know that $Z(t, x) - Z(s, y) \sim \cN(0, \sigma^2)$ for some $\sigma$. 
Then, the standard deviation is $\E\left[\lvert Z(t, x)-Z(s, y)\rvert^2\right]^{1/2} = \sigma$ since the Gaussian is centered.
We further know that 
\begin{equation*}
	\E\left[\lvert Z(t, x)-Z(s, y)\rvert^p\right] = \E\left[\lvert \sigma U\rvert^p\right] = \sigma^p\E\left[\lvert U\rvert^p\right] = C(p) \sigma^p
\end{equation*}
for some $U \sim \cN(0, 1)$ and all $p \in \N$.
Now, by assumption~\eqref{eqn:std_bound}, $\sigma \leq C d_M(x, y)^\xi$ and hence
\begin{equation*}
	\E\left[\lvert Z(t, x)-Z(s, y)\rvert^p\right] \leq C(p) C^p d_M(x, y)^{p\xi} = \tilde{C}_p d_M(x, y)^{p\xi}.
\end{equation*}
Choose $p > \frac{d}{\xi}$ arbitrary, and  $q = \xi p$. 
This yields 
\begin{equation*}
	\E\left[\lvert Z(t, x)-Z(s, y)\rvert^p\right] \leq \tilde{C}_p d_M(x, y)^q,
\end{equation*}
and the assumptions of \cite[Theorem~1.1]{kratschmerKolmogorovChentsovType2023} are satisfied, where our $\tilde{C}_p$ is their $M$.
Note that our space $M$ is a compact Riemannian manifold of dimension 3, cf.\ \cite[Remark~2]{kratschmerKolmogorovChentsovType2023}. Thus, we obtain the desired bounds and a modification that is $\beta$-Hölder continuous for all $\beta \in (0, q/p - d/p) = (0, \xi - d/p)$. Letting $p$ tend to infinity, we obtain that there is a modification that is in $C^{\xi^-}(M)$.
\qed

\end{document}